\newlength{\dinwidth}
\newlength{\dinmargin}
\newtheorem{definition}{Definition}
\newtheorem{theorem}{Theorem}
\newtheorem{remark}{Remark}
\def\be{\begin{equation}}
\def\ee{\end{equation}}
\def\ben{\begin{displaymath}}
\def\een{\end{displaymath}}
\def\baa{\begin{eqnarray}}
\def\eaa{\end{eqnarray}}
\def\ba{\begin{array}}
\def\ea{\end{array}}
\makeatletter \@addtoreset{equation}{section} \makeatother
\newtheorem{thm}{Theorem}[section]
\newtheorem{coro}[thm]{Corollary}
\newtheorem{lem}[thm]{Lemma}
\newtheorem{prop}[thm]{Proposition}
\newtheorem{defn}[thm]{Definition}
\newtheorem{eg}[thm]{Example}
\newtheorem{remk}[thm]{Remark}
\newcommand{\supp}{{\rm supp}}
\newcommand{\xit}{{\tilde{\xi}}}
\newcommand{\eps}{\varepsilon}
\newcommand{\bet}{\beta}
\newcommand{\spec}{\rm spec}
\newcommand{\alp}{\alpha}
\newcommand{\R}{{\mathbb{R}}}
\newcommand{\B}{{\mathbb{B}}}
\newcommand{\C}{{\mathbb{C}}}
\newcommand{\Z}{{\mathbb{Z}}}
\newcommand{\Cc}{{\mathcal C}_0}
\newcommand{\refeq}[1]{(\ref{#1})}
\newcommand{\dL}{\Delta_L}
\newcommand{\dF}{\Delta_F}
\newcommand{\la}{\lambda}
\newcommand{\lat}{{\tilde{\lambda}}}
\newcommand{\La}{\Lambda}
\newcommand{\Ccinf}{{\mathcal C}_0^\infty}
\newcommand{\dom}{\mbox{dom}}
\newcommand{\Tr}{{\rm Tr}}
\newcommand{\Green}{\mathcal{G}}
\newcommand{\dis}{\displaystyle}
\newcommand{\und}{{\frac{1}{2}}}
\renewcommand{\Re}{\mbox{Re}}
\title{Krein formula and $S$-matrix for Euclidean surfaces with conical singularities}
\author[L. Hillairet]
{Luc Hillairet}
\email{Luc.Hillairet@math.univ-nantes.fr}
\address{UMR CNRS 6629-Universit\'{e} de Nantes, 2 rue de la Houssini\`{e}re, \\
BP 92 208, F-44 322 Nantes Cedex 3, France}
\author[A. Kokotov]
{Alexey Kokotov}
\email{alexey@mathstat.concordia.ca}
\address{Department of Mathematics and Statistics, Concordia University\\
1455 de Maisonneuve Blvd. West \\
Montreal, Quebec H3G 1M8 Canada}
\begin{document}

\begin{abstract}
Using the Krein formula for the difference of the resolvents of two self-adjoint extensions of a symmetric operator with finite deficiency indices,
we establish a comparison formula for $\zeta$-regularized determinants of two self-adjoint extensions of the Laplace operator on a Euclidean surface with conical singularities (E. s. c. s.). The ratio of two determinants is expressed through the value $S(0)$ of the $S$-matrix, $S(\lambda)$, of the surface.
 We study the asymptotic behavior of the $S$-matrix, give an explicit expression for $S(0)$ relating it to the Bergman projective connection on
 the underlying compact Riemann surface and derive variational formulas for $S(\lambda)$ with respect to coordinates on the moduli space of E. s. c. s. with trivial holonomy.
\end{abstract}

\maketitle

\section{Introduction}
Spectral geometry aims at understanding the relations between the spectrum
of some Laplace operator in a given geometrical setting and  geometric
properties of the latter. Polygons and polyhedra are among the simplest shapes
one can consider and one could hope in this setting for a better understanding.
This leads naturally to study the spectral geometry of
Euclidean surfaces with conical singularities. Another motivation is the
spectral theory of translation surfaces for which the geometric
picture has many interesting developments (see \cite{Zorich} for instance).

One peculiarity of Laplacians on manifolds with conical points is that,
due to the presence of conical points,
a choice has to be made in order to get a self-adjoint
operator. In this paper, we are interested in understanding how this
choice affects several spectral
quantities such as the resolvent and the zeta-regularized
determinant. Depending on the self-adjoint extension, this
zeta-regularization procedure is not as straightforward as usual because
of unusual behavior of the zeta function but it is still
possible to define such a regularization
(see \cite{KLP, GKM} and section \ref{sec:zetareg}) and  we will prove a comparison
formula for these determinants.

Comparison formulas for regularized determinants for conical manifolds were first found
in \cite{LMP} using  a surgery formula
{\em \`a la} BFK (see \cite{BFK}) and
in \cite{KLP} using a contour integral method based on a secular equation
that defines the spectrum. One of our motivations was to understand how
the comparison formulas for different self-adjoint extensions
from \cite{LMP} read in the case
of Euclidean surfaces with conical singularities and whether it is
possible to express the determinants of the non Friedrichs
self-adjoint extensions of the Laplacian on these surfaces through
holomorphic invariants of the underlying Riemann surface (as it was
done in \cite{KokKor} for the determinant of the Friedrichs
extension). Indeed, Euclidean surfaces with conical singularities
are our primary interest and we will restrict to this setting
although many statements still make sense for more general conical
manifolds.

It turns out that the geometric interpretation of the formulas obtained in
\cite{LMP} and \cite{KLP} is not that straightforward and we have found
it more convenient to establish the comparison formula for determinants
using the Krein formula for the difference of
resolvents of two self-adjoint extensions of a symmetric operator.
We observe that the trace of the difference of two resolvents admits a nice representation through
the so-called $S$-matrix of a Euclidean surface with conical singularities (E. s. c. s.) $X$.
The latter matrix, or, more precisely, the meromorphic family of matrices $S(\lambda)$
is in some sense a characteristic feature of $X$. Indeed, we believe that some of the geometry
of $X$ (such as for instance the lengths of saddle-connections between conical points- see Remark \ref{rk:difforbit})
is encoded in $S(\lambda)$ although it seems quite difficult to retrieve this kind of information.
We should also remark that this
$S$-matrix allows to write down a secular equation that can then
be treated using the approach of \cite{KLP} so that what we
propose here may be seen as a geometric interpretation for the
latter method. The comparison with \cite{LMP} is less straightforward,
it relies in interpreting the $S$-matrix as some kind of limiting
Dirichlet-to-Neumann operator on a circle around the conical point
when the radius of that circle goes to $0.$ It can be noted here that,
in contrast with \cite{LMP} no extra condition is needed to obtain
our formula.

We will thus prove the following theorem. The notion of {\em regular} self-adjoint extensions will be
introduced in definition \ref{def:reg}, section \ref{sec:Kfrd} and, for these self-adjoint extensions,
the expression $P+QS(0)$ makes sense (see remark \ref{rk:DregL}).

\begin{theorem}\label{thm:intromain}
On a compact E.s.c.s. X, let $S(\la)$ be the $S$-matrix and $\dF$ be the Friedrichs
extension.

Let $P$ and $Q$ be matrices that define a regular self-adjoint extension $\dL,$
and define \[ D(\la):=\det(P+QS(\la)).\]
Let $d$ be the dimension of $\ker(P+QS(0))$ and let $D^*(0):= \lim_{\la\rightarrow 0} (-\la)^{-d}D(\la).$

There exist $\alp_0$ and $\Gamma$ such that the asymptotic expansion of $D(-|\lambda|)$ as $\lambda$ goes to $\infty$ is
\[
\ln D(-|\lambda|):=\alp_0\ln(|\lambda|)+\Gamma +o(1).
\]
The following identity then holds :
\[
{\det}^*_{\zeta}(\dL)=\exp(-\Gamma)D^*(0){\det}^*_\zeta(\dF),
\]
in which ${\det}^*_{\zeta}$ is the modified zeta-regularized determinant
(see definition \ref{def:moddet}.)
\end{theorem}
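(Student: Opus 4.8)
The plan is to start from the logarithmic derivative of the relative determinant and integrate. For each extension one has, in the regularized sense, $\frac{d}{d\la}\ln{\det}_\zeta(\Delta-\la)=-\Tr(\Delta-\la)^{-1}$; although $\Tr(\dL-\la)^{-1}$ and $\Tr(\dF-\la)^{-1}$ are individually divergent in dimension two, their difference is a genuine trace because $\dL$ and $\dF$ are self-adjoint extensions of a common symmetric operator with finite deficiency indices. Hence
\[
\frac{d}{d\la}\ln\frac{{\det}_\zeta(\dL-\la)}{{\det}_\zeta(\dF-\la)}=-\Tr\!\left[(\dL-\la)^{-1}-(\dF-\la)^{-1}\right].
\]
By the Krein formula of section \ref{sec:Kfrd}, the right-hand side is exactly $\frac{d}{d\la}\ln D(\la)$. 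Integrating over $\la$ in the resolvent set therefore gives
\[
\frac{{\det}_\zeta(\dL-\la)}{{\det}_\zeta(\dF-\la)}=e^{C}\,D(\la)
\]
for some constant $C$ independent of $\la$, and the whole theorem reduces to identifying $C$ and to controlling the two limits $\la\to-\infty$ and $\la\to 0$.

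To fix $C$ I would examine $\la=-|\la|\to-\infty$. Writing the left-hand side through the relative heat trace $h(t):=\Tr[e^{-t\dL}-e^{-t\dF}]$, whose small-$t$ expansion carries no negative powers of $t$ (the area and the leading local heat coefficients coincide for the two extensions), a Watson-type argument shows that the coefficient of $\ln|\la|$ in the large-$|\la|$ asymptotics of $\ln[{\det}_\zeta(\dL+|\la|)/{\det}_\zeta(\dF+|\la|)]$ equals the constant term of $h$, while every genuinely decaying contribution produces no constant term. Comparing with the defining expansion $\ln D(-|\la|)=\alp_0\ln|\la|+\Gamma+o(1)$ identifies $\alp_0$ with that relative heat coefficient and, crucially, forces the constant term of the relative determinant to vanish, so that $C=-\Gamma$. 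Establishing that the finite part of the relative determinant at $-\infty$ is indeed zero (equivalently that the $o(1)$ remainder hides no constant) is the main obstacle, and it is where the regularity hypothesis on $\dL$ and the precise normalization of $S(\la)$ enter.

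It remains to pass to $\la\to 0$. Splitting off the zero modes, for either extension $\zeta_{\Delta-\la}(s)=n\,(-\la)^{-s}+\sum_{\mu>0}(\mu-\la)^{-s}$ with $n=\dim\ker\Delta$, so differentiating at $s=0$ gives $\ln{\det}_\zeta(\Delta-\la)=n\ln(-\la)+\ln{\det}^*_\zeta(\Delta)+o(1)$; likewise $D(\la)=(-\la)^{d}D^*(0)(1+o(1))$ by the definition of $D^*(0)$. Substituting these into $\ln[{\det}_\zeta(\dL-\la)/{\det}_\zeta(\dF-\la)]=\ln D(\la)-\Gamma$ and matching the coefficients of $\ln(-\la)$ yields the consistency relation $d=\dim\ker\dL-\dim\ker\dF$ (which also follows by reading the zeros and poles of $D$ as the eigenvalues of $\dL$ and $\dF$ through the secular equation), while matching the $\la$-independent terms gives $\ln{\det}^*_\zeta(\dL)-\ln{\det}^*_\zeta(\dF)=\ln D^*(0)-\Gamma$. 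Exponentiating produces exactly ${\det}^*_\zeta(\dL)=\exp(-\Gamma)\,D^*(0)\,{\det}^*_\zeta(\dF)$, as claimed.
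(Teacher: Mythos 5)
Your skeleton---(i) relative resolvent trace equals $-D'/D$ by Krein's formula, (ii) integrate in $\la$, (iii) fix the integration constant by the $\la\to-\infty$ asymptotics, (iv) let $\la\to 0$ after splitting off zero modes---is exactly the paper's logical structure, and your step (iv) is essentially the paper's final argument. But there are two genuine gaps in the middle. The first is that your opening display, $\frac{d}{d\la}\ln{\det}_\zeta(\dL-\la)=-\Tr(\dL-\la)^{-1}$ ``in the regularized sense,'' presupposes that ${\det}_\zeta(\dL-\la)$ exists, i.e.\ that $\zeta(s,\dL-\la)$ admits an analytic continuation regular at $s=0$. For non-Friedrichs extensions this is precisely the delicate point (the ``exotic'' zeta behavior of \cite{KLP,GKM}), and it is a substantive part of what the paper proves: the contour-integral representation of $\zeta(s,\dL-\lat)-\zeta(s,\dF-\lat)$ in terms of $\xit'$, together with Lemma \ref{lem:xiasymp} (which rests on the parametrix analysis behind Proposition \ref{prop:Sasymp}), is what shows that the zeta function of a regular extension is regular at $s=0$ at all. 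You cannot take the left-hand side of your identity as given and then differentiate it; the object is only shown to exist by the very analysis you are trying to bypass. (The identity you invoke is indeed stated in the paper, with references to Forman, Hassell--Zelditch and Carron, but only as a remark \emph{after} Theorem \ref{thm:BKreg}, as a consequence rather than an ingredient.)

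The second gap is your identification $C=-\Gamma$ through the relative heat trace $h(t)=\Tr[e^{-t\dL}-e^{-t\dF}]$. The ``no constant term in the large-mass expansion'' lemma you want to apply is proved from a small-time expansion $h(t)=a_0+o(1)$, which you assert (``no negative powers of $t$, the local coefficients coincide'') but do not prove. Boundedness of $h(t)$ is easy, since the spectral shift function is bounded when the resolvent difference has finite rank; but the existence of an actual limit of $h(t)$ as $t\to 0$ amounts to Abel-type convergence of the spectral shift function at $+\infty$, i.e.\ control on the part of the axis where $D$ oscillates through its zeros and poles. Nothing in Proposition \ref{prop:Sasymp}, whose asymptotics live on the negative real axis, gives this; and passing from the known resolvent asymptotics $\Tr\left[(\dL-\la)^{-1}-(\dF-\la)^{-1}\right]\sim \alp_0/|\la|$ to the heat-trace statement is a Tauberian step requiring hypotheses ($h$ is neither positive nor monotone) that you have not verified. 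Establishing the small-time expansion directly would mean redoing the analysis of \cite{Mooers} or \cite{KLP} for general regular extensions. The paper avoids the heat kernel of $\dL$ altogether: it defines $h_C(s)$ from $\xit'$ on the negative axis, obtains $\zeta'(0,\dL-\lat)-\zeta'(0,\dF-\lat)=2i\pi\xit(\lat)+\ln D(-C)-\alp_0\ln C+h_C(0)$, and shows $h_C(0)\to 0$ as $C\to\infty$; this single computation simultaneously constructs the analytic continuation and identifies the constant as $-\Gamma$. To repair your proof you would either have to supply the relative heat expansion or move step (iii) to the resolvent side---at which point you are reproducing the paper's argument.
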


To fulfil our second aim we then need to understand more explicitly
what kind of geometric information is encoded
in the family $S(\la).$  We focus on the limiting behavior
when the spectral parameter goes to $0$ since this is the regime that
comes up in the comparison formula. We will prove
that most of the matrix elements in this limit have an
interpretation through values of the Bergman projective connection
and the basic holomorphic differentials taken at the conical point
in the corresponding distinguished holomorphic local parameter (see section \ref{sec:S0}).
Since we expect translation surfaces to have particular and interesting
features, we will also say a word on the $S$-matrix on these
special kind of surfaces. Namely, we will derive variational formulas for the
$S$-matrix when it is differentiated with respect to moduli parameters.
These results answer most of the questions which motivated our study.

\subsection*{Organization of the paper}
In the small second section we will recall the basic facts about Euclidean surfaces with
conical singularities. We will in particular recall that
these can be viewed as Riemann surfaces with flat conformal conical metric.

In  section 3 we recall some basic properties of the Friedrichs
Laplace operator on E. s. c. s.,
and introduce the object of our primary interest --- the $S$-matrix;  we also derive
here the standard formula for the
derivative of the $S$-matrix with respect to $\la$.

In section 4 we study the asymptotic behavior of $S(\la)$ as  $\la$
goes to $-\infty$ and find the geometric interpretation of $S(0).$ We also
also apply the variational formulas of \cite{KokKor} to obtain the variations of
$S(\la)$ with respect to moduli parameters on translation surfaces.

In section 5 we study various self-adjoint extensions of the Laplace operator on E. s. c. s. and prove the comparison formula for their
$\zeta$-regularized determinants.

{\bf Acknowledgements.}

The research of LH was partly supported by the ANR programs {\em NONaa} and
{\em Teichm\"uller}.

The research of AK was supported by NSERC.
AK thanks Hausdorff Research Institute for Mathematics (Bonn) and Laboratoire de Math\'ematiques Jean Leray (Nantes) for hospitality.
AK also thanks the MATPYL program for supporting his coming and stay in Nantes where this research began.

 We acknowledge useful conversations with G. Carron and with D. Korotkin whose advice in particular helped us to simplify some constructions
from section 4.2.

\section{Euclidean surfaces with conical singularities}
\subsection{Euclidean surfaces with conical singularities as Riemann surfaces
with conformal flat conical metrics}
A Euclidean surface with conical singularities (E. s. c. s.) is a compact (orientable)
surface glued from Euclidean triangles. One can take as an example of such a surface
the boundary of a connected but not necessarily simply connected polyhedron in ${\mathbb R}^3$.

When two triangles are glued together and after rotating one of the triangles
around the common edge we observe that the intrinsic geometry of the surface is locally that of the plane. 
There, the surface actually is smooth and equipped with a smooth Euclidean metric.
At a vertex $p$ where $k$ triangles with angles $\vartheta_1, \dots, \vartheta_k$ are glued together,
the surface is locally isometric to a neighbourhood of the tip of the Euclidean cone
of total angle $\theta_p= \vartheta_1+\dots+\vartheta_k.$ The surface $X$ is thus equipped with
a Euclidean metric that is smooth except at the vertices $p$ for which $\theta_p \neq 2\pi$.

It follows for instance from \cite{Troy} that $X$ can be
provided with a complex analytic structure becoming a compact Riemann surface $\tilde{X}$;
moreover, the usual Euclidean metric on $X$ gives rise to a flat {\it conformal}
(i. e. defining the same complex structure) metric on $\tilde{X}$. Abusing notations slightly, from
now on we won't make any difference between $X$ and $\tilde{X}$.

On the other hand, consider a flat conformal metric $m$ with conical singularities
on a Riemann surface $X.$ In a vicinity of a conical point $p,$ $m$
can be written as
\[
m=|g(z)||z|^{2b}|dz|^2,\]
where $z$ is a holomorphic local parameter near $p$,
$z(p)=0$,  $b>-1$ and $g(z)$ is a holomorphic function of the local
parameter such that $g(0)\neq 0$.

It is shown in \cite{Troy} that one can choose a holomorphic change of variables $z=z(\zeta)$ such that
\begin{equation}\label{dp}|g(z(\zeta))||z(\zeta)|^{2b}|z'(\zeta)|^2=|\zeta|^{2b}\end{equation}
and, therefore,
\begin{equation}\label{dpmetric}
m=|\zeta|^{2b}|d\zeta|^2
\end{equation}

in the local parameter $\zeta$. This means that the Riemannian surface $(X, m)$ near $p$ is isometric
to the standard Euclidean cone of angle $2\pi(b+1)$. Troyanov \cite{Troy} showed that
the Riemannian manifold $(X, m)$ can be triangulated in such a way that all the conical points
will be among the vertices of the triangulation meaning thus that $(X, m)$ is an E. s. c. s.

\begin{definition} Let $X$ be a compact Riemann surface with conformal
flat conical metric (i. e. a  E. s. c. s.) and let $p\in X$ be a conical point.
Then any holomorphic local parameter $\zeta$ in which the metric takes the form (\ref{dpmetric}) is called distinguished.
\end{definition}

\noindent{\em Notation :} We will denote by $P$ the set of conical points and by $X_0:=X\setminus P$ the complement
of $P$ in $X.$ We set $M:= \mathrm{Card}(P)$ the number of conical points.
At each $p\in P,$ the total cone angle is denoted by $\theta_p.$

\subsubsection{Translation and half-translation surfaces.}
A translation (resp. half-translation) surface is a E. s. c. s. that has
trivial holonomy (resp.  holonomy group $\Z_2$). These are important
examples of E.s.c.s. with very nice geometric properties
(see \cite{Zorich} for a survey on these).

Translation surfaces are Riemann surfaces $X$ that are equipped with a
conformal flat conical metric given by the modulus square, $m=|\omega|^2$,
of a holomorphic $1$-form (an Abelian differential) $\omega$.
If $P$ is a zero of $\omega$ of multiplicity $k$
then $p$ is a conical point of the translation surface $X$ with conical angle $2\pi(k+1)$.
The moduli space $H_g$ of pairs $(X, \omega)$ (where $X$ is a compact Riemann surface
of genus $g\geq 1$, $\omega$ is a holomorphic $1$-form on $X$) is stratified according
to the multiplicities of the zeros of the $1$-form $\omega$.
Denote by $H_g(k_1, \dots, k_M)$ the stratum consisting of pairs $(X, \omega)$,
where $\omega$ has $M$ zeros, $p_1, \dots, p_M$ of multiplicities $k_1, \dots, k_M$
(according to Riemann-Roch theorem one has $k_1+\dots+k_M=2g-2$).
The stratum $H_g(k_1, \dots, k_M)$ is a complex orbifold of dimension $2g+M-1$.

Let $(X, \omega)\in H_g(k_1, \dots, k_M)$.
Choose a canonical basis of cycles $\{a_\alpha, b_\alpha\}$ on the Riemann surface $X$ and
take $M-1$ contours $\gamma_k$, $k=2, \dots, M$ on $X$
connecting $p_1$ with $p_2$, \dots, $p_M$

The local coordinates on $H_g(k_1, \dots, k_M)$ (which are called Kontsevich-Zorich
homological coordinates, see \cite{KZ}) are given by
the following integrals:
$$A_\alpha=\oint_{a_{\alpha}}\omega;\ \ \ \  \alpha=1, \dots, g,$$
$$B_\alpha=\oint_{b_{\alpha}}\omega;\ \ \ \  \alpha=1, \dots, g,$$
$$z_k=\int_{\gamma_k}\omega; \ \ \  \ k=2, \dots, M-1\, .$$

A half-translation surface is a compact Riemann
surface with flat conical metric $m=|q|$, where $q$ is a meromorphic quadratic
differential with at most simple poles.

\begin{eg}\label{eg:spherepant}
Consider the Riemann sphere ${\mathbb C}P^1$
with  metric
$$\frac{|z|^2|dz|^2}{\prod_{k=1}^6|z-z_k|}\,,$$
where $z_k\in {\mathbb C}, z_k\neq 0$ and $z_i\neq z_k$ if $i\neq k$.
This is a half-translation surface with $7$ conical points $0, z_1, \dots, z_6$.
The conical angle at $0$ is $4\pi$, the conical angles at each point $z_k$ are equal
to $\pi$.

Such a surface can be viewed by considering a Euclidean
pair of pants (with one $4\pi$ singularity) and by sewing each leg and the waist
with itself (thus creating the six $\pi$ singularities).
\end{eg}

\section{The Friedrichs Laplacian and the S-matrix}\label{sec:defFried}
Let $X$ be a compact E.s.c.s.. In this section we will recall the definition of
the Friedrichs Laplacian associated with the (singular) metric and define
the so-called $S$-matrix. We will then collect several properties of this matrix.

We denote by $\Delta$ the minimal closed extension of the
Euclidean Laplacian defined on $\Ccinf(X_0),$ and by
$\Delta^*$ its adjoint with respect to the Euclidean $L^2$ scalar product
\[
\langle u, v\rangle := \int_X u\overline{v} \,dx.
\]

Near each conical point $p,$ any $u\in \dom(\Delta^*)$ has the following asymptotic behavior in polar
coordinates $(r,\theta)$~(see, e. g., \cite{Mooers}, \cite{LMP}, \cite{NazPla} or \cite{KokKor})~:
\begin{equation}\label{defab}
u(r,\theta)\,=\, \sqrt{2\theta_p}\left(a^+_0 + a^-_0 \ln(r)\right) + \sum_{\nu} \sqrt{2|\nu|\theta_p}\left(a^+_\nu r^{|\nu|}+a^-_\nu r^{-|\nu|}\right)
\exp(i\nu \theta) + u_0,
\end{equation}
where $\nu$ ranges over $N_p:= \left \{ \frac{2\pi}{\theta_p}\cdot k,~ |~k\in\Z\backslash \{0\},~|k|< \frac{\theta_p}{2\pi} \right \},$
and $u_0\in \dom(\Delta)$.

 \hfill \\
\noindent{\em{Notation :}} We will denote by $N=\cup_{p\in P} N_p,$ and we will abusively still denote
by $\nu$ an element of $N.$ Choosing an element $\nu$ of $N$ thus amounts to
choosing a conical point $p$ and then some $\nu$ in $N_p.$ Unless needed we will omit the
reference to $p.$ The square roots prefactor in (\ref{defab}) are just normalization constants.
We will denote these constants by $C_0:=\sqrt{2\theta_p}$ and $C_\nu:=\sqrt{2|\nu|\theta_p}$
(we recall that since $\nu$ implicitly depends on $p$, so does $C_\nu$).

In the distinguished local parameter $\zeta$ near $p$ we have, for
$\nu = \frac{2\pi}{\theta_p}\cdot k$
\begin{gather}
\zeta^k\,=\,r^{\nu} \exp(i\nu \theta) \,=\, \left\{ \begin{array}{lr}  r^{|\nu|} \exp(i\nu \theta) & \mbox{if}~~\nu >0,\\
\\
								  r^{-|\nu|} \exp(i\nu \theta) & \mbox{if} ~~\nu<0.
					\end{array} \right .
\\
\overline{\zeta}^{-k}\,=\,r^{-\nu} \exp(i\nu \theta) \,=\,
\left\{ \begin{array}{lr}  r^{|\nu|} \exp(i\nu \theta) & \mbox{if}~~\nu < 0,\\
\\
r^{-|\nu|} \exp(i\nu \theta) & \mbox{if} ~~\nu>0. \end{array} \right .
\end{gather}					

Thus the asymptotic expansion \refeq{defab} may also be written
\begin{equation}\label{eq:defabdlp}
u(\zeta,\bar{\zeta})\,=\, C_0\left(a^+_0 + a^-_0 \ln(|\zeta|)\right) + \sum_{k=1}^{\frac{\theta_p}{2\pi}-1} C_{k\frac{2\pi}{\theta_p}}\left(a^+_k \zeta^k+a^-_k \bar{\zeta}^{-k} +
a^+_{-k}\bar{\zeta}^k\,+\,a^-_{-k}\zeta^{-k}\right )
 + u_0.
\end{equation}
	
A straightforward application of Green's formula (combined with the choice of the normalization constants $C_0,\, C_\nu$)
then implies that, for any $u,v$ in
$\dom(\Delta^*)$,
\begin{gather}\label{Green1}
\langle \Delta^* u, v\rangle\, -\,\langle u, \Delta^* v\rangle \,=\,
\sum_{p\in P}\left [ {a^+_0}\cdot \overline{b^-_0}\,-\,
{a^-_0} \cdot \overline{b^+_0}\,+\,\sum_{\nu \in N_p}  \left({a^+_\nu}
\cdot \overline{b^-_\nu}\,-\,
{a^-_\nu} \cdot \overline{b^+_\nu}\right)\right] \,
\end{gather}
where the $a^\pm_\nu$ are the coefficients in the expansion of $u$ and the
$b^\pm_\nu$ those in the expansion of $v.$

Setting $\Green(u,v):= \langle \Delta^* u, v\rangle\, -\,\langle u, \Delta^* v\rangle$ we define a Hermitian symplectic form
on $\dom(\Delta^*)/\dom(\Delta)$ whose lagrangian subspaces parametrize the
self-adjoint extensions of $\Delta.$

\subsection{The Friedrichs extension}
For any $u\in \dom(\Delta)$ a straightforward integration by parts gives
\[
\langle \Delta u, u \rangle = \int_X |\nabla u|^2 dx
\]
so that the Friedrichs procedure (see \cite{BirSol} section 10.3 or \cite{RSII} theorem X.23)
provides us with a self-adjoint extension that we denote by $\Delta_F.$
Since a function $u$ in $\dom(\Delta_F)$ is characterized by $\nabla u \in L^2(X),$ we obtain the
following lemma.

\begin{lem}
The lagrangian subspace in $\dom(\Delta^*)/\dom(\Delta)$ that corresponds to
the Friedrichs extension is
\[
\left \{ a_\nu^- =0 \right \}.
\]
\end{lem}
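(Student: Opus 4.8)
The plan is to use the characterization of the Friedrichs extension just recalled: since $\Delta_F$ is a self-adjoint extension of $\Delta$ it is a restriction of $\Delta^*$, and $u\in\dom(\Delta_F)$ if and only if $u\in\dom(\Delta^*)$ and $\nabla u\in L^2(X)$. The whole point is therefore to translate the condition $\nabla u\in L^2(X)$ into a condition on the coefficients $a^\pm_\nu$ of the expansion (\ref{defab}). Away from the conical points the metric is smooth and flat, so the only possible obstruction comes from the behaviour near each $p\in P$. Moreover the remainder $u_0$ lies in $\dom(\Delta)$, and every element of $\dom(\Delta)$ has $L^2$ gradient (for $w\in\Ccinf(X_0)$ one has $\|\nabla w\|^2=\langle\Delta w,w\rangle\le\frac12(\|\Delta w\|^2+\|w\|^2)$, so the $H^1$-seminorm is dominated by the graph norm and passes to the closure). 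Hence, near a fixed $p$, $\nabla u\in L^2$ if and only if the gradient of the explicit finite singular sum $u-u_0$ is in $L^2$, and I would analyse the latter term by term in polar coordinates $(r,\theta)$, where the metric reads $dr^2+r^2d\theta^2$, the area element is $r\,dr\,d\theta$, and $|\nabla f|^2=|\partial_r f|^2+r^{-2}|\partial_\theta f|^2$.

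First I would compute the gradient energy of each mode near $r=0$. For $f=r^{|\nu|}\exp(i\nu\theta)$ one gets $|\nabla f|^2=2\nu^2 r^{2|\nu|-2}$, whence
\[
\int_0^\epsilon\!\!\int_0^{\theta_p}|\nabla f|^2\,r\,dr\,d\theta \,=\, 2\theta_p\,\nu^2\int_0^\epsilon r^{2|\nu|-1}\,dr\,<\,\infty,
\]
since $|\nu|>0$; thus every $a^+_\nu$ term is admissible, and the constant mode has vanishing gradient. For $f=r^{-|\nu|}\exp(i\nu\theta)$ the same computation gives the integrand $2\nu^2 r^{-2|\nu|-1}$, non-integrable at $0$, and for the logarithmic mode $f=\ln r$ one has $|\nabla f|^2=r^{-2}$, with integrand $r^{-1}$, again non-integrable. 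The main point to handle carefully is that within a fixed angular frequency the two radial solutions $r^{|\nu|}$ and $r^{-|\nu|}$ occur together, so I must rule out a cancellation: writing $g(r)=a^+_\nu r^{|\nu|}+a^-_\nu r^{-|\nu|}$ one has $g'(r)=|\nu|r^{-|\nu|-1}(a^+_\nu r^{2|\nu|}-a^-_\nu)$, so $g'(r)\sim -a^-_\nu|\nu|r^{-|\nu|-1}$ as $r\to0$ and $|g'(r)|^2 r\sim|a^-_\nu|^2\nu^2 r^{-2|\nu|-1}$ fails to be integrable precisely when $a^-_\nu\neq0$; the bounded $a^+_\nu$ contribution cannot absorb this divergence. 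The analogous argument applied to $a^+_0+a^-_0\ln r$ shows that the energy diverges exactly when $a^-_0\neq0$.

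I would then conclude by orthogonality. Since the angular factors $\exp(i\nu\theta)$ are mutually orthogonal on $[0,\theta_p)$, the gradient energy of $u-u_0$ near $p$ splits as a sum over the modes with no interaction between distinct frequencies; hence $\nabla u\in L^2$ near $p$ forces each divergent contribution to vanish, i.e. $a^-_0=0$ and $a^-_\nu=0$ for every $\nu\in N_p$. Running this over all $p\in P$ shows that membership in $\dom(\Delta_F)$ implies the vanishing of all the coefficients collectively denoted $a^-_\nu$. Conversely, if all these coefficients vanish then every surviving term, together with $u_0$, has finite gradient energy, so $\nabla u\in L^2(X)$ and $u\in\dom(\Delta_F)$. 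Therefore the image of $\dom(\Delta_F)$ in $\dom(\Delta^*)/\dom(\Delta)$ is exactly the subspace $\{a^-_\nu=0\}$; as a consistency check, this subspace is indeed Lagrangian for $\Green$, since setting $a^-_\nu=b^-_\nu=0$ annihilates the right-hand side of (\ref{Green1}).
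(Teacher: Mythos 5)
Your proposal is correct and follows essentially the same route as the paper: the paper derives the lemma directly from the characterization that $u\in\dom(\Delta_F)$ is equivalent to $u\in\dom(\Delta^*)$ with $\nabla u\in L^2(X)$, which is exactly the starting point of your argument. Your mode-by-mode computation of the Dirichlet energy (including the cancellation check within a fixed frequency and the orthogonality of the angular modes) simply fills in the details that the paper leaves implicit when it says the lemma is ``obtained'' from this characterization.
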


\begin{definition}
We denote by $H^s:=\dom( \dF^{\frac{s}{2}})$ the scale of Sobolev spaces associated with it.
In particular we set $\dom(\Delta_F):= H^2.$
\end{definition}

\begin{remark}
This definition of $H^s$ is not completely standard. In particular, because of the conical
singularities, for $m>1$ the following inclusion is strict (see \cite{Forni} for a much more detailed
discussion about this fact) :
\[
\{ u\in L^2~|~\forall |\alp|\leq m,  \partial^{\alp} u\in L^2\} \subset H^m.
\]
\end{remark}

By standard spectral theory, the resolvent of $\Delta_F$ defines a continuous operator from $H^s$ to $H^{s+2}$.
We also recall that since $X$ is compact, the Rellich-type injection theorem
from \cite{CT} implies that $\dF$ has compact resolvent so that the spectrum is non-negative
and discrete.

\subsection{The S-matrix}
We will now define a matrix associated to the flat structure and to the choice of the
Friedrichs extension.

First, for any $\nu,$ we fix  $F_\nu\,=\,C_\nu r^{-|\nu|}\exp(i\nu\theta)\rho(r)$ where
$\rho$ is some fixed cut-off function that is identically $1$ near the corresponding
conical point $p.$

We define $\Lambda_\nu$ to be the linear functional on $H^2$ satisfying
\begin{equation}\label{eq:Green2}
\forall u\in H^2,~~\Lambda_\nu(u)\,=\,\Green(u,F_\nu).
\end{equation}

We have the following lemma.
\begin{lem}
The linear functional $\Lambda_\nu$ is continuous on $H^2$ and
\[
\forall u\in H^2,~~\Lambda_\nu(u)\,=\,a^+_\nu
\]

 where $a^+_\nu$ is the coefficient in the
expansion \refeq{defab} of $u$ near $p.$
\end{lem}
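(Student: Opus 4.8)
The plan is to establish continuity first and then identify the functional with the coefficient $a^+_\nu$ by a careful reading of the symplectic pairing \refeq{Green1}. For the continuity, I would observe that $F_\nu$ is a fixed element of $\dom(\Delta^*)$: indeed $F_\nu = C_\nu r^{-|\nu|}\exp(i\nu\theta)\rho(r)$ has a pure $a^-_\nu$-type singularity (its only nonzero expansion coefficient is the one multiplying $C_\nu r^{-|\nu|}\exp(i\nu\theta)$, namely $b^-_\nu = 1$) and away from $p$ it is smooth and compactly localized by the cut-off $\rho$, so $\Delta^* F_\nu$ is a genuine $L^2$ function. Thus $\Green(\cdot,F_\nu)=\langle\Delta^* \cdot, F_\nu\rangle - \langle \cdot, \Delta^* F_\nu\rangle$ is a difference of two pairings, each of which is controlled by the $H^2$-norm (the first because $\Delta^*$ maps $H^2$ continuously into $L^2$ and $F_\nu\in L^2$, the second because the inclusion $H^2\hookrightarrow L^2$ is continuous and $\Delta^* F_\nu\in L^2$). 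This gives the required continuity on $H^2$.

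For the identification, the key point is that an element $u\in H^2=\dom(\dF)$ satisfies $a^-_\nu = 0$ for every $\nu$, by the preceding lemma characterizing the Friedrichs lagrangian as $\{a^-_\nu = 0\}$. I would then apply the Green pairing formula \refeq{Green1} with $v = F_\nu$. Since the expansion coefficients of $F_\nu$ are $b^-_\nu = 1$ and all other $b^\pm_{\nu'} = 0$ (including $b^-_0 = b^+_0 = 0$ and $b^+_{\nu'}=0$ for all $\nu'$), every term in the sum collapses except the single term $a^+_\nu \cdot \overline{b^-_\nu} = a^+_\nu$. The terms involving $a^-$-coefficients of $u$ vanish automatically because $u$ lies in the Friedrichs domain, so there is no contribution from the $a^-_\nu\cdot\overline{b^+_\nu}$ part either. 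Hence $\Green(u,F_\nu) = a^+_\nu$, which is exactly the claimed formula.

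The main subtlety, and the step I would check most carefully, is the normalization bookkeeping: one must verify that with the chosen prefactors $C_0$ and $C_\nu$ the symplectic pairing really takes the clean diagonal form \refeq{Green1} so that $a^+_\nu$ appears with coefficient exactly $1$ and not some power of $C_\nu$. This is precisely the reason the constants $C_\nu=\sqrt{2|\nu|\theta_p}$ were inserted in \refeq{defab}, and the text already asserts that ``a straightforward application of Green's formula (combined with the choice of the normalization constants)'' yields \refeq{Green1}; so I would lean on that computation rather than redo it. A second point worth a line is that the cut-off $\rho$ does not affect the coefficient reading: changing $\rho$ alters $F_\nu$ only by an element of $\dom(\Delta)$ (a function smooth and decaying near $p$ with no singular part), on which $\Green(u,\cdot)$ vanishes for $u\in\dom(\Delta^*)$, so the functional $\Lambda_\nu$ and the identity are independent of the choice of $\rho$.
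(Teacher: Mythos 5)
Your proposal is correct and follows essentially the same route as the paper's (much terser) proof: continuity is deduced from $F_\nu\in\dom(\Delta^*)$, and the identification $\Lambda_\nu(u)=a^+_\nu$ comes from plugging the respective expansion coefficients of $u$ and $F_\nu$ into Green's formula \refeq{Green1}. Your closing observation about independence of the cut-off $\rho$ is also exactly the content of the remark the paper places immediately after the lemma.
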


\begin{proof}
The fact that $F_\nu \in \dom(\Delta^*)$ implies that
$\Lambda_\nu$ is indeed continuous. The second statement follows from
the respective asymptotic behaviors of $F_\nu$ and $u$ near $p$.
\end{proof}

\begin{remk}
The preceding lemma in particular implies that the linear functional $\Lambda_\nu$
doesn't depend on the choice of the cut-off function $\rho.$
\end{remk}

For $\lambda \in \C\setminus [0,\infty),$ we set
\[
G_\nu(\cdot\,;\la) := \, (\Delta_F-\lambda)^{-1} \Lambda_\nu.
\]

Since $\Lambda_\nu$ is in $H^{-2}$, $G_\nu$ is in $L^2,$ and for any $u\in H^2$, we have
\begin{equation}\label{eq:LambdaG}
\Lambda_\nu(u)\,=\, \langle (\Delta_F-\lambda)u, G_\nu(\cdot\,;\overline{\la}) \rangle .
\end{equation}

Since the resolvent is analytic in $\la$, $G_\nu(\cdot\,;\la)$ defines an analytic family of $L^2$
functions.

Observe that the latter equation is equivalent to
\[ (\Delta^*-\lambda)G_\nu(\cdot;\lambda) =0, \]
so that $G_\nu(\cdot; \lambda)\in \dom(\Delta^*).$
Moreover, by testing against an appropriate $u\in H^2$ we
can compute the coefficients $a^-_\mu$ of $G_\nu.$
This yields  $a^-_\mu= \delta_{\mu\nu}$ (where $\delta$ is the Kronecker symbol).

The following proposition gives a formula for $G_\nu$

\begin{prop}\hfill\\
For any $\lambda \in \C\setminus [0,\infty),$ set $f_\nu(\cdot\, ;\lambda):= (\Delta^*-\lambda)F_\nu$ and
$g_\nu(\cdot\,;\lambda)\,:=\, -\left ( \Delta_F -\lambda\right )^{-1}f_\nu(\cdot ; \lambda).$
Then $g_\nu(\cdot\,;\la)$ is an analytic family in $H^2$ and
\[
G_\nu(\cdot\, ; \lambda)\,=\, F_\nu(\cdot) \,+\,g_\nu(\cdot\, ; \lambda).
\]
\end{prop}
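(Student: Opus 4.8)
The plan is to characterize $G_\nu$ as the unique element of $\dom(\Delta^*)$ solving the homogeneous equation $(\Delta^*-\la)G_\nu=0$ with the prescribed singular data $a^-_\mu=\delta_{\mu\nu}$, and then to verify that $F_\nu+g_\nu$ shares exactly these properties. First I would observe that $F_\nu\in\dom(\Delta^*)$ (as already used in the proof of the lemma defining $\Lambda_\nu$; recall also that $|\nu|<1$ forces $F_\nu\in L^2$). Consequently $f_\nu=(\Delta^*-\la)F_\nu=\Delta^* F_\nu-\la F_\nu$ is a genuine element of $L^2$, so $g_\nu=-(\dF-\la)^{-1}f_\nu$ is well defined and lies in $H^2=\dom(\dF)$. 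Since $f_\nu(\cdot\,;\la)=\Delta^* F_\nu-\la F_\nu$ is affine, hence analytic, as an $L^2$-valued function of $\la$, and the resolvent $(\dF-\la)^{-1}\colon L^2\to H^2$ is analytic for $\la\notin[0,\infty)$, the family $g_\nu(\cdot\,;\la)$ is analytic in $H^2$; this settles the first assertion.

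For the identity itself I would work with the difference $G_\nu-F_\nu$, which belongs to $\dom(\Delta^*)$ since both terms do. From the computation preceding the proposition one has $a^-_\mu(G_\nu)=\delta_{\mu\nu}$ (including vanishing of the logarithmic coefficient $a^-_0(G_\nu)$, as $\nu\neq 0$), while the explicit shape $F_\nu=C_\nu r^{-|\nu|}\exp(i\nu\theta)\rho(r)$ shows, by comparison with \refeq{defab}, that $a^-_\mu(F_\nu)=\delta_{\mu\nu}$ and that $F_\nu$ carries no logarithmic term. Hence every singular coefficient of $G_\nu-F_\nu$ vanishes, and by the lemma identifying the Friedrichs Lagrangian with $\{a^-_\nu=0\}$ the class of $G_\nu-F_\nu$ in $\dom(\Delta^*)/\dom(\Delta)$ lies in that Lagrangian, so $G_\nu-F_\nu\in\dom(\dF)$.

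Once this membership is in hand the conclusion is immediate: on $\dom(\dF)$ the operators $\dF$ and $\Delta^*$ agree, so
\[
(\dF-\la)(G_\nu-F_\nu)=(\Delta^*-\la)G_\nu-(\Delta^*-\la)F_\nu=0-f_\nu=-f_\nu,
\]
and applying the bounded inverse $(\dF-\la)^{-1}$, which exists because $\la\notin[0,\infty)\supseteq\spec(\dF)$, yields $G_\nu-F_\nu=-(\dF-\la)^{-1}f_\nu=g_\nu$, which is the claimed decomposition.

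I expect the one delicate point to be the membership $G_\nu-F_\nu\in\dom(\dF)$: it rests on matching \emph{all} the $a^-$ coefficients, the logarithmic one included, and on the precise fact that within $\dom(\Delta^*)$ the Friedrichs domain is cut out exactly by the vanishing of these coefficients. This is bookkeeping rather than hard analysis — the square-integrability of $f_\nu$ is automatic from $F_\nu\in\dom(\Delta^*)$, and the analyticity is a routine consequence of the analyticity of the resolvent — so the argument should reduce, once the coefficient accounting is done correctly, to the single application of the resolvent displayed above.
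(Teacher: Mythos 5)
Your proof is correct and takes essentially the same route as the paper's: both rest on $f_\nu\in L^2$ (hence $g_\nu\in H^2$), analyticity of $g_\nu$ inherited from the resolvent, and the observation that matching all the $a^-$-coefficients places the relevant difference inside the Friedrichs domain, where invertibility of $\dF-\la$ concludes. The only difference is cosmetic bookkeeping: the paper shows $G_\nu-(F_\nu+g_\nu)$ lies in $H^2$ and is annihilated by $\Delta^*-\la$, hence vanishes, while you show $G_\nu-F_\nu\in\dom(\dF)$ satisfies the same resolvent equation as $g_\nu$ and invoke injectivity of $\dF-\la$.
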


\begin{proof}
Computation shows that $f_\nu$ is in $L^2(X)$ which yields that
$g_\nu$ is in $H^2$ since $\lambda$ is in the resolvent set of $\Delta_F.$
Since $f_\nu$ and the resolvent depend analytically on $\la$ so does $g_\nu.$
By construction, $\left(\Delta^*-\lambda\right)(F_\nu + g_\nu)=0$
and all the $a_\mu^-$ coefficients of $G_\nu - (F_\nu+g_\nu)$ vanish.
This means that the latter function is in $H^2$ and thus is $0$ since
$\lambda$ is in the resolvent set.
\end{proof}

\begin{eg}\label{GonCone}
Let us consider the complete cone $[0,\infty)\times \R/\alp\Z.$
Using separation of variables we have that $G_\nu(r,\theta\,;\lambda)\,=\,k(r)\exp(i\nu\theta).$
For $\nu\neq 0$, by
definition $k$ is the unique solution to
\[
-k''-\frac{1}{r}k'+\left(\frac{\nu^2}{r^2}-\la \right)k\,=\,0,
\]
which is $L^2(rdr)$ and asymptotic to $C_\nu r^{-|\nu|}$ near $0.$
Thus $k$ is proportional to $K_\nu(\sqrt{-\lambda}\,r)$ where
$K_\nu$ is Bessel-MacDonald function (see \cite{Olver} for instance).
For $\nu=0,$ the singular behavior is logarithmic but
$k(r)$ still is proportional to $K_0(\sqrt{-\la}\,r)$
\end{eg}

\begin{defn}[The S-matrix]
We define the S-matrix $S(\lambda)$ by
\begin{equation}\label{defS}
S_{\mu\nu}(\la)\,=\,\Lambda_\mu(g_\nu(\cdot\,;\la)).
\end{equation}
\end{defn}
\hfill \\

\begin{remk}
Alternatively, $S_{\mu\nu}(\la)$ is the $a^+_\mu$ coefficient of $g_\nu(\cdot\,;~\la)$.
It is also the $a^+_\mu$ coefficient of $G_\nu(\cdot\,;~\la).$
Observe that the entries of the $S$-matrix are numbered by non-integer numbers.
\end{remk}

Using \refeq{eq:LambdaG}, we have the following alternative expression
\begin{equation*}
S_{\mu\nu}(\la)\,=\,\left\langle \left( \Delta_F-\la\right)g_\nu(\cdot\,;\la),G_\mu(\cdot\,;\overline{\la})\right\rangle \,
=\,\langle f_\nu(\cdot\,;\la),G_\mu(\cdot\,;\overline{\la})\rangle
\end{equation*}

It follows from the analyticity of $g_\nu$ that $S(\lambda)$ is analytic on $\C\setminus [0;\infty).$

\begin{eg}\label{SonCone}
We define $S_\alp(\la)$ to be the S-matrix of the cone of angle $\alp.$
According to example \ref{GonCone}, $S_\alp(\la)$ is diagonal.
Moreover, the asymptotic expansion of Bessel-Macdonald functions near $0$ is
\begin{gather*}
K_0(z)\,=\,-\ln (z)+\ln(2)-\gamma\,+\,o(1),\\
K_{|\nu|}(z)\,=\,\frac{\pi}{2\sin(|\nu|\pi)}\left[ \frac{z^{-|\nu|}}{2^{-|\nu|}\Gamma(1-|\nu|)}
-\frac{z^{|\nu|}}{2^{|\nu|}\Gamma(1+|\nu|)}+O(z^{2-|\nu|})\right ]
\end{gather*}
where $\Gamma$ is Euler gamma function and $\gamma$ Euler's constant (see for instance \cite{Olver}).
This yields
\begin{gather*}
[S_\alp(\la)]_{00}\,=\, \ln(\sqrt{-\la})-\left( \ln(2)-\gamma \right),\\
\\
[S_\alp(\la)]_{\nu\nu}\,=\,-\frac{\Gamma(1-|\nu|)(-\la)^{|\nu|}}{2^{2|\nu|}\Gamma(1+|\nu|)}.
\end{gather*}
\end{eg}

The interpretation of $S(\lambda)$ is given by the following lemma.

\begin{lem}
For any $\la \in \C\setminus [0,\infty)$ and any $F\in \ker(\Delta^*-\la)$.
Denote by $A^{\pm}(F)$ the vector consisting of all the coefficients $a^-_\nu$ (resp.
$a^+_\nu$) of $F.$
Then we have
\[
 A^+\,=\, S(\la)A^-.
 \]
\end{lem}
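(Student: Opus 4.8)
The plan is to use the functions $G_\nu(\cdot\,;\la)$ as a distinguished family of solutions of $(\Delta^*-\la)F=0$ and to reconstruct an arbitrary element of $\ker(\Delta^*-\la)$ from its singular coefficients $A^-$. Recall first that the index set over which $\nu$ ranges is finite: at each of the finitely many conical points there is one logarithmic channel together with the finitely many $\nu\in N_p$ (since $|k|<\frac{\theta_p}{2\pi}$ bounds $k$). Hence every sum below is finite and $S(\la)$ is a genuine finite matrix. I shall use three facts already established in this section: that $(\Delta^*-\la)G_\nu=0$, that the $a^-_\mu$ coefficient of $G_\nu$ equals $\delta_{\mu\nu}$, and that the $a^+_\mu$ coefficient of $G_\nu$ equals $S_{\mu\nu}(\la)$.

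First I would form
\[
\tilde F\,:=\,F\,-\,\sum_{\nu} a^-_\nu(F)\,G_\nu(\cdot\,;\la).
\]
Being a finite linear combination of elements of $\ker(\Delta^*-\la)$, the function $\tilde F$ again satisfies $(\Delta^*-\la)\tilde F=0$. Its $a^-$ coefficients are computed by linearity: for every $\mu$,
\[
a^-_\mu(\tilde F)\,=\,a^-_\mu(F)\,-\,\sum_{\nu}a^-_\nu(F)\,\delta_{\mu\nu}\,=\,0.
\]
Next I would invoke the lemma characterizing the Friedrichs extension, whose lagrangian subspace in $\dom(\Delta^*)/\dom(\Delta)$ is exactly $\{a^-_\nu=0\}$. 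Since all the $a^-$ coefficients of $\tilde F$ vanish, $\tilde F$ lies in $\dom(\dF)$; as it also solves $(\dF-\la)\tilde F=0$ with $\la\in\C\setminus[0,\infty)$ in the resolvent set of $\dF$, injectivity of $\dF-\la$ forces $\tilde F=0$, that is $F=\sum_\nu a^-_\nu(F)\,G_\nu$. Reading off the $a^+_\mu$ coefficient of both sides and using $a^+_\mu(G_\nu)=S_{\mu\nu}(\la)$ gives
\[
a^+_\mu(F)\,=\,\sum_{\nu}S_{\mu\nu}(\la)\,a^-_\nu(F),
\]
which is precisely $A^+=S(\la)A^-$.

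The step needing the most care is the passage from ``$\tilde F\in\dom(\Delta^*)$ with all $a^-$ coefficients zero'' to ``$\tilde F\in\dom(\dF)$''. This rests on the fact that the singular coefficients descend to well-defined functionals on the quotient $\dom(\Delta^*)/\dom(\Delta)$ and that the Friedrichs lagrangian is cut out exactly by the vanishing of all the $a^-$ data. I would make sure that both the logarithmic channel ($a^-_0$ at each $p$) and the power channels ($a^-_\nu$, $\nu\in N_p$) are included in this vanishing condition, so that $\tilde F$ is genuinely a Friedrichs element rather than merely a general element of $\dom(\Delta^*)$. Once this is secured, uniqueness of the solution of $(\dF-\la)\tilde F=0$ for $\la$ off the spectrum closes the argument with no further estimates.
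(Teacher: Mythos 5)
Your proof is correct and follows essentially the same route as the paper: subtract $\sum_\nu a^-_\nu(F)\,G_\nu(\cdot\,;\la)$ from $F$, observe that the difference lies in $\ker(\Delta^*-\la)$ with all $a^-$ coefficients zero, hence belongs to $\dom(\dF)$ by the lagrangian characterization of the Friedrichs extension, and vanishes because $\la$ is in the resolvent set; then read off the $a^+_\mu$ coefficients. The only cosmetic difference is that you extract $a^+_\mu(G_\nu)=S_{\mu\nu}(\la)$ directly, whereas the paper writes $G_\nu=F_\nu+g_\nu$ and applies $\Lambda_\mu$ to $\sum_\nu a^-_\nu g_\nu$ — these are the same computation since $F_\nu$ contributes nothing to the $a^+$ data.
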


\begin{remk}
Interpreting $A^-$ as some kind of incoming data and
$A^+$ as the outgoing data justifies the interpretation of the $S$-matrix as
a scattering matrix.
\end{remk}

\begin{proof}
Set $\tilde{F}:= \sum_{\nu} a^-_\nu G_\nu(\cdot\,;\la)$ then $F-\tilde{F}$ is in
$\dom(\ker(\Delta^*-\la)).$ Since all the $a^-_\nu$ vanish, $F-\tilde{F}$ actually is in
$\dom(\dF)$. This implies $F=\tilde{F}$ since $\la$ is in the resolvent set of $\dF.$
Writing each $G_\nu\,=\,F_\nu+g_\nu$, we obtain :
\[
a_\mu^+ \,=\,\Lambda_\mu(\sum_{\nu} a_\nu^-g_\nu)\,=\,\sum_\nu S(\lambda)_{\mu\nu}a^-_\nu.\]
\end{proof}

\begin{remk}
Until now we haven't used the fact that the underlying metric actually is Euclidean
with conical singularities. The preceding construction is fairly general and
can be made on any manifold with conical singularities. Actually, it can be done
in an abstract manner for any symmetric operator with (equal) finite deficiency indices
(compare with section 13.4 of \cite{Grubb}).
\end{remk}

Before coming to the main aim of this paper, which is to understand how much geometric information
is contained in the S-matrix, we derive first two basic properties of $S_{\mu\nu}(\lambda).$

\subsection{Derivative of the S-matrix}\label{secders}
In this section a dot will mean differentiation with respect to $\la,$ and we prove the following lemma.

\begin{lem}\label{lemders}
On $\C\setminus [0,\infty),$ we have
\begin{equation}\label{eqders}
\dot{S}_{\mu\nu}\,=\,\langle G_\nu(\cdot\,;\la),G_\mu(\cdot\,;\overline{\la})\rangle.
\end{equation}
\end{lem}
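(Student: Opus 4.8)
The identity \refeq{eqders} is a Maass--Selberg type relation, and the plan is to derive it by evaluating the Hermitian symplectic form $\Green$ on two generalized eigenfunctions attached to two \emph{different} spectral parameters and then differentiating. First I would fix $\la,\la'\in\C\setminus[0,\infty)$ and set $u:=G_\nu(\cdot\,;\la)$ and $v:=G_\mu(\cdot\,;\overline{\la'})$. Both belong to $\dom(\Delta^*)\cap L^2$ and satisfy $\Delta^*u=\la u$ and $\Delta^*v=\overline{\la'}\,v$. Their boundary coefficients are known from the earlier discussion: for $u$ one has $a^-_\sigma=\delta_{\sigma\nu}$ and $a^+_\sigma=S_{\sigma\nu}(\la)$, while for $v$ one has $b^-_\sigma=\delta_{\sigma\mu}$ and $b^+_\sigma=S_{\sigma\mu}(\overline{\la'})$, the index $\sigma$ running over all of $N$ together with the logarithmic ($\sigma=0$) slot at each conical point.

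Next I would compute $\Green(u,v)$ in two ways. Using $\Green(u,v)=\langle\Delta^*u,v\rangle-\langle u,\Delta^*v\rangle$, the eigenequations, and the conjugate-linearity of the scalar product in its second argument, one gets
\[
\Green(u,v)=(\la-\la')\,\langle G_\nu(\cdot\,;\la),G_\mu(\cdot\,;\overline{\la'})\rangle .
\]
Feeding instead the boundary coefficients above into the boundary pairing \refeq{Green1} and collapsing the Kronecker deltas gives
\[
\Green(u,v)=S_{\mu\nu}(\la)-\overline{S_{\nu\mu}(\overline{\la'})} .
\]

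Equating the two evaluations yields the master identity
\[
(\la-\la')\,\langle G_\nu(\cdot\,;\la),G_\mu(\cdot\,;\overline{\la'})\rangle=S_{\mu\nu}(\la)-\overline{S_{\nu\mu}(\overline{\la'})} .
\]
Putting $\la'=\la$ annihilates the left-hand side and produces the symmetry $S_{\mu\nu}(\la)=\overline{S_{\nu\mu}(\overline{\la})}$; applying this relation at the parameter $\la'$ replaces $\overline{S_{\nu\mu}(\overline{\la'})}$ by $S_{\mu\nu}(\la')$, so the right-hand side becomes the difference $S_{\mu\nu}(\la)-S_{\mu\nu}(\la')$. Dividing by $\la-\la'$ and letting $\la'\to\la$ then gives exactly \refeq{eqders}.

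The two evaluations of $\Green$ are routine; the steps that need care are the bookkeeping of complex conjugates (the scalar product is conjugate-linear in the second slot and $v$ is built from $G_\mu$ at the conjugate parameter $\overline{\la'}$) and the inclusion of the logarithmic terms in \refeq{Green1}. The only genuinely analytic point, which I expect to be the main thing to justify, is the passage to the limit $\la'\to\la$ in the difference quotient: this is legitimate because $z\mapsto G_\mu(\cdot\,;z)=F_\mu+g_\mu(\cdot\,;z)$ is an analytic, hence $L^2$-continuous, family, so that $\langle G_\nu(\cdot\,;\la),G_\mu(\cdot\,;\overline{\la'})\rangle\to\langle G_\nu(\cdot\,;\la),G_\mu(\cdot\,;\overline{\la})\rangle$ as $\la'\to\la$.
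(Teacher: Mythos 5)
Your proof is correct, but it takes a genuinely different route from the paper's. The paper's argument is a short resolvent computation: it differentiates the defining relation $(\dF-\la)g_\nu(\cdot\,;\la)=-(\Delta^*-\la)F_\nu$ in $\la$ to get $(\dF-\la)\dot{g}_\nu(\cdot\,;\la)=G_\nu(\cdot\,;\la)$, and then concludes from $\dot{S}_{\mu\nu}=\La_\mu(\dot{g}_\nu)=\La_\mu\left((\dF-\la)^{-1}G_\nu(\cdot\,;\la)\right)$ together with the representation \refeq{eq:LambdaG} of $\La_\mu$ through $G_\mu(\cdot\,;\overline{\la})$. Your argument instead is a Maass--Selberg computation: you pair two elements of $\ker(\Delta^*-\la)$ and $\ker(\Delta^*-\overline{\la'})$ under the Hermitian symplectic form $\Green$, which is legitimate since both lie in $\dom(\Delta^*)$ and their coefficient vectors are exactly those recorded in the paper ($a^-_\sigma=\delta_{\sigma\nu}$, $a^+_\sigma=S_{\sigma\nu}$, including the logarithmic slots); your two evaluations and the resulting master identity
\[
(\la-\la')\,\langle G_\nu(\cdot\,;\la),G_\mu(\cdot\,;\overline{\la'})\rangle=S_{\mu\nu}(\la)-\overline{S_{\nu\mu}(\overline{\la'})}
\]
check out. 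What the paper's route buys is brevity: analyticity of the resolvent does all the work and no limiting argument is needed. What your route buys is extra information: the diagonal case $\la'=\la$ yields the symmetry $S_{\mu\nu}(\la)=\overline{S_{\nu\mu}(\overline{\la})}$, a reciprocity/reality statement the paper never makes explicit (though it is implicitly used, e.g.\ the reality of $G_\nu$ for negative real $\la$ in the proof of Proposition \ref{prop:aG}), and the identity before the limit is a first-order resolvent-type equation for $S$ of independent interest. The two analytic points you flag are indeed the only ones needing care, and both are available from the paper: $\la\mapsto g_\mu(\cdot\,;\la)$ is analytic in $H^2$, hence $L^2$-continuous, which justifies the limit on the left-hand side, and the analyticity of $S$ (established just before the lemma) identifies the limit of your difference quotient with $\dot{S}_{\mu\nu}$ --- alternatively, the existence of the left-hand limit by itself proves differentiability of $S_{\mu\nu}$, so your argument even reproves that fact.
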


\begin{proof}
We start from the relation
\[
\left( \Delta_F-\la \right )g_\nu(\cdot\,;\la)\,=\,-\Delta^*F_\nu(\cdot)+\la F_\nu(\cdot),
\]
that we differentiate with respect to $\la$. Since $F_\nu$ doesn't depend on $\la$ and
$g_\nu$ is analytic in $H^2$ we obtain
\[
 \left( \Delta_F-\la \right )\dot{g}_\nu(\cdot\,;\la)\,=F_\nu(\cdot)+g_\nu(\cdot\,;\la)\,=\, G_\nu(\cdot\,;\la).
\]
This gives
\begin{eqnarray*}
\dot{S}(\la)_{\mu\nu}&=& \La_\mu\left( \dot{g}_\nu(\cdot\,;\la)\right) \\
& =& \La_\mu\left( (\dF-\la)^{-1}G_\nu(\cdot\,;\la)\right)\\
&=& \langle G_\nu(\cdot\,;\la), G_\mu(\cdot\,;\overline{\la})\rangle,
\end{eqnarray*}
where we have used \refeq{eq:LambdaG} for the last identity.
\end{proof}

\subsection{Relation with the resolvent kernel}
Denote by $R(x,x'\,; \lambda )$ the resolvent kernel of the Friedrichs extension $\dF.$

Fix $x'\in X_0.$ As a function of the first argument, $R(\cdot,x'\,;\lambda)$ is locally
in $H^2$ near each conical point $p.$ Thus according to \refeq{defab}, there exists
a collection $a^+_\nu(x'\,;\lambda)$ such that, in the neighbourhood of $p$ we have the following
asymptotic expansion :
\begin{equation}\label{eq:AsymptResolv}
R(r\exp(i\theta),x';\lambda)= \sum_{\nu\in N_p} C_\nu a^+_\nu(x'\,;\lambda)r^{|\nu|}\exp(i\nu \theta)+ r_0
\end{equation}
with $r_0\in \overline{\Cc^\infty(X_0)}^{H^2}.$

Using \refeq{eq:Green2}, we see that $a^+_\nu(x'\,;\la)\,=\, \Green(R(\cdot\,,\,x'\,;\la),\,F_\nu)$
and thus, the former expansion may be differentiated with respect to $x'$ in any compact set of
$X_0.$

The following proposition makes the relation between $a^+_\nu(x'\,;\la)$ and $G_\nu(x';\lambda)$
more explicit.

\begin{prop}\label{prop:aG}
For any $x'\in X_0,$ we have
\begin{equation}\label{eq:aG}
G_\nu(x'\,;{\lambda})\,=\,a_\nu^+(x'\,;\lambda)
\end{equation}
where  $a_\nu^+(x';\la)$ is the previously described coefficient in the asymptotic
expansion of $R(\cdot,x';\la)$ near $p$.
\end{prop}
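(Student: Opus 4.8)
The plan is to identify $G_\nu(x';\lambda)$ and $a_\nu^+(x';\lambda)$ by exploiting the symmetry of the resolvent kernel of the self-adjoint operator $\dF$ together with the characterization of $G_\nu$ as the solution of $(\Delta^*-\lambda)G_\nu=0$ whose incoming coefficients are $a^-_\mu=\delta_{\mu\nu}$. The key observation I would start from is that $a_\nu^+(x';\lambda)$ is by construction the $a^+_\nu$-coefficient of the function $R(\cdot,x';\lambda)$ near $p$, and that this coefficient is extracted by the functional $\Lambda_\nu$ through Green's pairing, namely $a_\nu^+(x';\lambda)=\Green(R(\cdot,x';\lambda),F_\nu)$, as already noted just before the statement.

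First I would rewrite $\Green(R(\cdot,x';\lambda),F_\nu)$ using the definition $\Green(u,v)=\langle\Delta^*u,v\rangle-\langle u,\Delta^*v\rangle$. Since $R(\cdot,x';\lambda)$ satisfies $(\Delta^*-\lambda)R(\cdot,x';\lambda)=\delta_{x'}$ away from the diagonal issues, the term $\langle\Delta^* R(\cdot,x';\lambda),F_\nu\rangle$ contributes essentially $\lambda\langle R(\cdot,x';\lambda),F_\nu\rangle$ plus the evaluation $\overline{F_\nu(x')}$ coming from the delta, while the term $\langle R(\cdot,x';\lambda),\Delta^*F_\nu\rangle=\langle R(\cdot,x';\lambda),\lambda F_\nu+f_\nu\rangle$ uses $f_\nu=(\Delta^*-\lambda)F_\nu$ from the Proposition above. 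The $\lambda$-terms cancel and one is left with a combination of $F_\nu(x')$ and $\langle R(\cdot,x';\lambda),f_\nu\rangle$. The latter I would rewrite using self-adjointness of the resolvent: $\langle R(\cdot,x';\lambda),f_\nu\rangle=\overline{\langle (\dF-\overline\lambda)^{-1}\overline{f_\nu},\,\cdot\,\rangle}$ evaluated at $x'$, i.e. it reproduces $-g_\nu(x';\lambda)$ (up to the sign built into $g_\nu=-(\dF-\lambda)^{-1}f_\nu$). Assembling these gives $a_\nu^+(x';\lambda)=F_\nu(x')+g_\nu(x';\lambda)=G_\nu(x';\lambda)$ by the Proposition.

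The cleaner route, which I would actually prefer to present, avoids the delta bookkeeping entirely: apply equation \refeq{eq:LambdaG}, which states $\Lambda_\nu(u)=\langle(\Delta_F-\lambda)u,G_\nu(\cdot;\overline\lambda)\rangle$ for $u\in H^2$, to a suitable approximation of $R(\cdot,x';\lambda)$. Formally taking $u=R(\cdot,x';\lambda)$ gives $a_\nu^+(x';\lambda)=\Lambda_\nu(R(\cdot,x';\lambda))=\langle(\Delta_F-\lambda)R(\cdot,x';\lambda),G_\nu(\cdot;\overline\lambda)\rangle=\langle\delta_{x'},G_\nu(\cdot;\overline\lambda)\rangle=\overline{G_\nu(x';\overline\lambda)}$, and one then checks that $\overline{G_\nu(x';\overline\lambda)}=G_\nu(x';\lambda)$ from the reality structure of the construction. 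The main obstacle is that $R(\cdot,x';\lambda)$ is not in $H^2$ (it has a local singularity at $x'$ and the coefficient-extraction identity $\Lambda_\nu(u)=a^+_\nu$ and formula \refeq{eq:LambdaG} were proved only for $u\in H^2$), so the pairing $\langle\delta_{x'},G_\nu(\cdot;\overline\lambda)\rangle$ must be justified. I would handle this by fixing $x'\in X_0$ away from the conical points, cutting off $R(\cdot,x';\lambda)$ near $x'$ and working in a neighbourhood of $p$ where $G_\nu(\cdot;\overline\lambda)$ is smooth enough, using that the asymptotic expansion \refeq{eq:AsymptResolv} may be differentiated on compact subsets of $X_0$ (established just before the statement) to make the evaluation at $x'$ rigorous. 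This localization argument, rather than any algebraic manipulation, is where the real work lies.
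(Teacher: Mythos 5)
Your second route has the same skeleton as the paper's own proof: both reduce \refeq{eq:aG} to the intermediate identity $a_\nu^+(x';\la)=\langle\delta_{x'},G_\nu(\cdot;\overline{\la})\rangle=\overline{G_\nu(x';\overline{\la})}$ and then remove the conjugation. But the step you defer as ``where the real work lies'' is precisely the content of the paper's argument, and the paper fills it with no cutoff at all: it regards $x'$ as an \emph{artificial conical point of angle $2\pi$}, introduces the Laplacian $\Delta_1$ on $\Ccinf(X\setminus(P\cup\{x'\}))$, and observes that $R(\cdot,x';\la)$ and $G_\nu(\cdot;\overline{\la})$ both lie in $\dom(\Delta_1^*)$ and are annihilated by $(\Delta_1^*-\la)$ and $(\Delta_1^*-\overline{\la})$ respectively. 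Green's formula \refeq{Green1}, already available for such operators, then forces the Hermitian symplectic pairing of their singular data to vanish; reading off that pairing (the $a^+$-data of $R$ at $p$ against the data $a^-_\mu=\delta_{\mu\nu}$ of $G_\nu$, plus the logarithmic coefficient of $R$ at $x'$ against the value of $G_\nu$ there) gives exactly $\overline{G_\nu(x';\overline{\la})}-a_\nu^+(x';\la)=0$. Your localization argument, once written out, is nothing other than this Green's-formula computation around $x'$; as it stands, your proposal asserts the conclusion of that computation rather than performing it.

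Your first route, which you do present as a computation, contains sesquilinearity errors that are not cosmetic. With $\langle u,v\rangle=\int u\overline{v}$, the two ``$\la$-terms'' are $\la\langle R,F_\nu\rangle$ and $\langle R,\la F_\nu\rangle=\overline{\la}\,\langle R,F_\nu\rangle$, so they cancel only for real $\la$; the delta term is $\overline{F_\nu(x')}$, not $F_\nu(x')$; and the resolvent pairing $\langle R(\cdot,x';\la),f_\nu\rangle$ produces a conjugate, $-\overline{g_\nu(x';\la)}$ for real $\la$, not $-g_\nu(x';\la)$. Tracked correctly, route 1 therefore lands on $a_\nu^+(x';\la)=\overline{G_\nu(x';\la)}$ (for real $\la$), i.e.\ on the same intermediate identity as above, and not yet on the statement. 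The conjugation can only be removed afterwards, as the paper does, by analytic continuation from real negative $\la$ together with the reality properties of the construction; and this final step, which you dispatch in one clause (``from the reality structure''), is the genuinely delicate point. Complex conjugation interchanges the angular indices $\nu$ and $-\nu$ in all the expansions: on the model cone $G_\nu\propto K_{|\nu|}(\sqrt{-\la}\,r)e^{i\nu\theta}$, so $\overline{G_\nu(\cdot;\overline{\la})}=G_{-\nu}(\cdot;\la)$, not $G_\nu(\cdot;\la)$. So the identification of $\overline{G_\nu(x';\overline{\la})}$ with a $G$-function at parameter $\la$ must be carried out explicitly against the indexing conventions of \refeq{defab} and \refeq{eq:AsymptResolv} (the paper itself is very brief here), rather than asserted; a proposal that hides all conjugations is exactly the kind of argument that cannot detect this issue.
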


In other words, $G_\nu(x';\la)$ is obtained by selecting in
the resolvent kernel $R(x,x'\,;\la)$ some particular term in the asymptotic behavior
$x\rightarrow p.$ Using $\overline{R(x',x\,;\overline{\la})}\,=\,R(x,x'\,;\la)$
there are similar statements when we fix $x$ and let $x'$ tends to $p.$

\begin{proof}
Denote by $\Delta_1$ the Euclidean Laplace operator on $\Cc^{\infty}(X\setminus (P\cup \{ x'\})).$
This operator fits in the general theory described in section \ref{sec:defFried} by considering that $x'$ actually is the
vertex of a cone of angle $2\pi.$ In particular, Green's formula \refeq{defab} is still valid provided
we take into account $\log$ singularities at $x'.$
The resolvent kernel $R(\cdot,x';\lambda)$ and $G_\nu(\cdot; \la)$ both belong to $\dom(\Delta_1^*).$
the singularities of $R$ are described by the functions $a^+_\nu$ near the conical points and $R$ has a
$\log$ singularity near $x'$ whereas $G_{\nu}$ is smooth near $x'$ and its singular behavior
near the conical points $G_\nu$ is prescribed by \refeq{eq:AsymptResolv}.
Green's formula thus yields :

\[
\langle (\Delta_1^*-\lambda)R(\cdot,x';\la),G_\nu(\cdot\,\overline{\la})\rangle -
\langle R(\cdot,x'\,;\la),(\Delta_1^*-\overline{\la})G_\nu(x'\,;\overline{\la})\rangle\,=\,
\overline{G_\nu(x'\,;\overline{\lambda})}-a_\nu^+(x'\,;\la).
\]

Since $(\Delta_1^*-\lambda)R(\cdot,x'\,;\la)=0=(\Delta_1^*-\overline{\la})G_\nu(x'\,;\overline{\la}),$
we obtain
\[
\overline{G_\nu(x'\,;\overline{\lambda})}\,=\,a_\nu^+(x'\,;\la).
\]
We now use the fact that $G_\nu(x;\lambda)$ is analytic for $\lambda\in \C\backslash [0,\infty)$ and real for
real (and negative) $\lambda.$ Thus by analytic continuation
\[
\overline{G_\nu(x'\,;\overline{\lambda})}\,=\,G_\nu(x',\lambda).
\]
\end{proof}

\section{The S-matrix of E.s.c.s.}
In this section we try to understand what kind of geometric information is encoded
in the S-matrix of a Euclidean surface with conical singularities.
We begin by studying the asymptotic behavior of $S(\la)$ as $\la$ goes to $-\infty.$

\subsection{$S(-|\lambda|)$ for large $\la$.}
It is a general fact that the behavior of the resolvent kernel when $\la$ goes to $-\infty$ is
a local quantity.

This is confirmed by the following lemma.

\begin{lem}
When $\lambda$ goes to $\infty$ then
\[
[S(-|\lambda|)]_{\mu\nu}=O(|\lambda|^{-\infty}),
\]
if $\mu$ and $\nu$ do not correspond to the same conical point.\\
When $\mu$ and $\nu$ correspond to the same conical point $p$ of angle $\alpha$ then we have
\[
[S(-|\la|)]_{\mu\nu}\,=\,[S_\alpha(-|\la|)]_{\mu\nu}+O(|\la|^{-\infty}),
\]
where $S_\alp$ denotes the S-matrix on the infinite cone of total angle $\alpha.$

Moreover both identities may be differentiated with respect to $\la.$
\end{lem}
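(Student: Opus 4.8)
The plan is to reduce every entry of $S(\la)$ to a matrix element of the resolvent between functions that are localized near the conical points, and then to exploit the exponential off-diagonal decay of the resolvent of $\dF$ as $\la=-|\la|\to-\infty$. First I would record an algebraic reduction. Set $R(\la):=(\dF-\la)^{-1}$, so that $g_\nu=-R(\la)f_\nu$. Starting from $S_{\mu\nu}(\la)=\Lambda_\mu(g_\nu)=\Green(g_\nu,F_\mu)$ and using $\Delta^*g_\nu=\dF g_\nu=\la g_\nu-f_\nu$ (moving the real scalar $\la$ through the Hermitian product and recognizing $\la F_\mu-\Delta^*F_\mu=-f_\mu$), a direct computation gives, for $\la=-|\la|$,
\[
S_{\mu\nu}(-|\la|)\,=\,-\langle f_\nu,F_\mu\rangle\,+\,\langle R(\la)f_\nu,f_\mu\rangle .
\]
I would fix the cut-offs $\rho$ once and for all so that each $F_\nu$, and hence each $f_\nu=(\Delta^*-\la)F_\nu$, is supported in a small neighbourhood $U_p$ of the conical point $p$ carrying $\nu$, with the $U_p$ pairwise disjoint and each isometric to a neighbourhood of the tip of the model cone of angle $\theta_p$. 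Note that $\|f_\nu\|_{L^2}=O(|\la|)$, since $\Delta^*F_\nu$ is $\la$-independent while $\|\la F_\nu\|=O(|\la|)$.

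The technical input I would use is an Agmon / Combes--Thomas estimate: for cut-offs $\phi,\phi'$ with $d(\supp\phi,\supp\phi')\ge\delta>0$ one has, in the relevant Sobolev norms, $\|\phi R(\la)\phi'\|\le C|\la|^{-1}e^{-c\delta\sqrt{|\la|}}$, uniformly for $\la$ in a sector about $(-\infty,0)$; the same holds for the resolvent $R_\alp(\la)$ of the Friedrichs Laplacian on the model cone of angle $\alp$. Granting this, the off-diagonal case is immediate: if $\mu,\nu$ correspond to different conical points, then $F_\mu$ and $f_\nu$ have disjoint supports, so the first term vanishes, while the second is bounded by $\|f_\mu\|\,\|f_\nu\|\,\|\phi R(\la)\phi'\|=O(|\la|^{2}\cdot|\la|^{-1}e^{-c\delta\sqrt{|\la|}})=O(|\la|^{-\infty})$.

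For the diagonal case I would compare $R(\la)$ with $R_\alp(\la)$ by a parametrix localized in the isometric neighbourhood $U_p$. Choosing cut-offs $\chi,\psi$ supported in $U_p$ with $\chi\equiv1$ on $\supp f_\nu\cup\supp f_\mu$ and $\psi\equiv1$ on $\supp\chi$, and using that $\dF$ and the cone Laplacian agree on $U_p$, one obtains the identity
\[
R(\la)f_\nu\,=\,\psi\,R_\alp(\la)f_\nu\,-\,R(\la)\,[\Delta,\psi]\,R_\alp(\la)f_\nu .
\]
Pairing with $f_\mu$, the first term equals $\langle R_\alp(\la)f_\nu,f_\mu\rangle$ exactly (since $\psi\equiv1$ on $\supp f_\mu$), while the commutator $[\Delta,\psi]$ is supported at positive distance from $\supp f_\nu$, so the error term is $O(|\la|^{-\infty})$ by the decay estimate. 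Because the local isometry also identifies $\langle f_\nu,F_\mu\rangle$ with its cone counterpart, the reduction formula gives $S_{\mu\nu}(-|\la|)=[S_\alp(-|\la|)]_{\mu\nu}+O(|\la|^{-\infty})$.

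Finally, differentiability in $\la$ follows because all of the above estimates are uniform for $\la$ in a full sector about the negative real axis, on which $\mathrm{dist}(\la,[0,\infty))\sim|\la|$ keeps the decay rate of order $\sqrt{|\la|}$; since $S_{\mu\nu}(\la)$ and the model quantities are analytic there, Cauchy's formula on circles of radius of order $|\la|$ transfers the $O(|\la|^{-\infty})$ bounds to every $\la$-derivative. I expect the main obstacle to be precisely the uniform exponential decay estimate through the conical singularities: one must verify that the Combes--Thomas conjugation argument survives at the cone tips and for the Friedrichs extension. This is where one uses that the intrinsic distance function is Lipschitz with $|\nabla d|\le1$ and that $\dF\ge0$ is self-adjoint, so that conjugating $\dF-\la$ by $e^{\tau d}$ with $\tau$ of order $\sqrt{|\la|}$ produces a bounded invertible perturbation; the rest is routine cut-off bookkeeping.
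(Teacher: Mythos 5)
Your proof is correct, but it takes a genuinely different route from the paper's. The paper never introduces weighted resolvent estimates: it writes the resolvent kernel as the Laplace transform of the heat kernel, $R(x,x';-|\la|)=\int_0^\infty e^{-t|\la|}\mathcal{P}(t,x,x')\,dt$, builds the Cheeger-type parametrix $\tilde{\mathcal{P}}(t,x,x')=\sum_i\tilde{\chi}_i(x)\mathcal{P}_i(t,x,x')\chi_i(x)$ out of the model heat kernels on the cones and on the plane, and invokes Duhamel's principle together with the off-diagonal decay of the $\mathcal{P}_i$ (quoted from \cite{Cheeger}) to get $\|\tilde{\mathcal{P}}(t)-\mathcal{P}(t)\|_{L^2\to H^s}=O(t^\infty)$ as $t\to 0$; inserting this into the Laplace transform and integrating against $f_\nu$ shows that $g_\nu(\cdot\,;-|\la|)$ agrees with its model-cone counterpart up to an $H^2$ remainder of size $O(|\la|^{-\infty})$, and applying the $H^2$-continuous functional $\Lambda_\mu$ yields both identities; the differentiated statement is obtained by replacing $\mathcal{P}$ by $\dF\mathcal{P}$ in the same argument. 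You work instead entirely at fixed spectral parameter: your reduction $S_{\mu\nu}(-|\la|)=-\langle f_\nu,F_\mu\rangle+\langle R(\la)f_\nu,f_\mu\rangle$ is correct (it is essentially the paper's own identity $S_{\mu\nu}(\la)=\langle f_\nu(\cdot\,;\la),G_\mu(\cdot\,;\overline{\la})\rangle$ unwound), and you replace the heat-kernel input by a Combes--Thomas estimate plus a resolvent gluing identity. The trade-off: the paper's route imports all the hard analysis from the conical heat-kernel literature, so nothing new has to be proved; your route avoids heat kernels altogether, and the uniformity of the Combes--Thomas bound in a sector around $(-\infty,0)$ combined with Cauchy's formula handles the differentiated statement, for derivatives of all orders, more transparently than the paper's one-line remark. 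Two points you should make explicit to fully close the argument: (i) the Combes--Thomas conjugation for $\dF$ runs at the level of quadratic forms, and multiplication by the bounded Lipschitz weight $e^{\tau d}$ preserves the form domain even across the conical points, so the estimate indeed holds for the Friedrichs extension (you correctly flag this as the crux); (ii) $\psi R_\alp(\la)f_\nu\in\dom(\dF)$ --- true because $\psi\equiv 1$ near the tip, so multiplication by $\psi$ creates no singular coefficients --- which is what legitimizes applying $\dF-\la$ in your gluing identity, together with the observation that your reduction formula holds verbatim for $S_\alp$ on the infinite cone, so that the two leading terms really are equal.
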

\begin{proof}
We use the representation of the resolvent kernel using the heat kernel (that we denote here
by ${\mathcal{P}}(t,x,x')$)~:
\begin{equation}\label{heattoresolvent}
R(x,x';-|\la|)\,=\, \int_0^\infty \exp(-t|\la|){\mathcal{P}}(t,x,x') \,dt.
\end{equation}

We now use a standard construction of a parametrix for the heat kernel (see \cite{Cheeger} for instance).
We first enumerate the set of conical points  writing $P:= \{ p_i, ~ 1\leq p_i\leq M\}.$ Then, for each $p_i$
we choose $\tilde{\chi}_i $ and $\chi_i$ two smooth cut-off functions such that $\supp(\chi_i) \subset \{ \tilde{\chi}_i=1\},$
$\chi_i$ is identically $1$ near $p$ and $X$ is isometric to a neighbourhood of the tip of the cone of angle $\theta_{p_i}$
on the support of $\tilde{\chi}_i$. We complete the collections $(\chi_i)_{i\leq M}$ and $(\tilde{\chi}_i)_{i\leq M}$
to $(\chi_i)_{i\leq \tilde{M}},~ (\tilde{\chi}_i)_{i\leq \tilde{M}}$ in such a way that $(\chi_i)_{i\leq \tilde{M}}$ is a
partition of unity, $\tilde{\chi}_i$ is identically $1$ on the support of $\chi_i$ and, for $M < i \leq \tilde{M},\ X$ is
isometric to a neighbourhood of the origin in $\R^2$ on the support of $\tilde{\chi}_i.$ We also set $\mathcal{P}_{i}$ to be the
heat kernel on the cone corresponding to $p_i$ if $i\leq M$ and on the plane otherwise and define
\[
\tilde{{\mathcal{P}}}(t,x,x')\,=\, \sum_{i=1}^{\tilde{M}} \tilde{\chi}_i(x){\mathcal{P}}_i(t,x,x')\chi_i(x).
\]

Using Duhamel's principle and the fact that $\mathcal{P}_i$ fastly decays away of the diagonal
(see eq (1.1) of \cite{Cheeger}) yields that
$\tilde{{\mathcal{P}}}(t)-\mathcal{P}(t)$ maps $L^2$ into $H^s$ for any $s,$ and
\[
\| \tilde{{\mathcal{P}}}(t)-\mathcal{P}(t)\|_{L^2\rightarrow H^s} \,=\, O(t^\infty)
\]
when $t$ goes to $0,$ so that $\tilde{\mathcal{P}}$ is a parametrix for the heat kernel.

Inserting into \refeq{heattoresolvent} and integrating against $f_\nu$ we obtain
\[
g_\nu(x;-|\la|)\,=\,\tilde{\chi}_i(x)\int_0^\infty \int_X \mathcal{P}_i(t,x,x')f_{\nu}(x';-|\la|)dS(x')dt + r_\la(x),
\]
where the remainder $r_\la\in H^2$ and $\|r_\la\|_{H^2}\,=\,O(|\la|^{-\infty})$
and the index $i$ corresponds to the conical point corresponding to $\nu.$
The first statement follows. The second also follows by remarking that $F_\nu, f_\nu$ and $\Lambda_\nu$
can also be seen as living on the cone and that the latter equation is also valid on the complete cone.
Differentiating with respect to $\la$ amounts to replace $\mathcal{P}$ by $\Delta_F\mathcal{P}$
and we can use the same argument.
\end{proof}

Using example \ref{SonCone} we obtain the following proposition as a corollary.
\begin{prop}\label{prop:Sasymp}
When $\lambda$ goes to $\infty$ we have
\[
\begin{array}{lcl}
[S(-|\lambda|)]_{\mu\nu} &= &O(|\lambda|^{-\infty})~~~~~~~~\mbox{if}~~~~~~ \mu \neq \nu,\\%
{[S(-|\lambda|)]}_{\nu\nu}
&= & -\frac{\Gamma(1-|\nu|)}{2^{2|\nu|}\Gamma(1+|\nu|)}\cdot |\la|^{|\nu|}\,
+\,O(|\lambda|^{-\infty}),~~~~~\mbox{if}~~~~~~ \nu\neq 0,\\
{[S(-|\lambda|)]}_{00} &=&  \frac{1}{2}\ln(|\la|) \,-\,\left( \ln(2)-\gamma\right)\,+\,O(|\lambda|^{-\infty}).\\
\end{array}
\]
\end{prop}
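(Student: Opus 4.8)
The plan is to derive Proposition \ref{prop:Sasymp} directly from the preceding lemma by reducing everything to the model computation on the infinite cone recorded in Example \ref{SonCone}. The lemma already does the hard analytic work: it says that, up to errors of order $O(|\la|^{-\infty})$, the off-diagonal entries of $S(-|\la|)$ between different conical points vanish, and the block associated to a single conical point $p$ of angle $\alp$ coincides with the cone $S$-matrix $[S_\alp(-|\la|)]_{\mu\nu}$. Since $[S_\alp(\la)]$ is diagonal (Example \ref{GonCone} shows $G_\nu$ separates variables on the cone, so different angular modes do not couple), the only surviving entries in the single-point block are the diagonal ones $[S_\alp(-|\la|)]_{\nu\nu}$. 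Combining these two facts immediately gives the first line of the proposition, $[S(-|\la|)]_{\mu\nu}=O(|\la|^{-\infty})$ whenever $\mu\neq\nu$.

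For the diagonal entries I would simply substitute $\la\mapsto -|\la|$ into the explicit formulas for $[S_\alp(\la)]_{\nu\nu}$ and $[S_\alp(\la)]_{00}$ from Example \ref{SonCone}. With $-\la = |\la|$ we have $(-\la)^{|\nu|}=|\la|^{|\nu|}$, so
\[
[S_\alp(-|\la|)]_{\nu\nu}\,=\,-\frac{\Gamma(1-|\nu|)}{2^{2|\nu|}\Gamma(1+|\nu|)}\,|\la|^{|\nu|},
\]
which is exactly the claimed $\nu\neq 0$ expression, and for the logarithmic mode $\sqrt{-\la}=|\la|^{1/2}$ gives
\[
[S_\alp(-|\la|)]_{00}\,=\,\ln(|\la|^{1/2})-(\ln 2-\gamma)\,=\,\und\ln(|\la|)-(\ln 2-\gamma).
\]
Adding back the $O(|\la|^{-\infty})$ error furnished by the lemma yields the last two lines. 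Note that the Bessel-MacDonald expansions quoted in Example \ref{SonCone} are valid for non-integer $|\nu|$, which is precisely the case here since $\nu\in N_p$ forces $|\nu|=k\cdot\tfrac{2\pi}{\theta_p}$ to be a non-integer rational (the denominators $\Gamma(1\mp|\nu|)$ are then finite and nonzero), so the formula is well-defined.

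The only genuine content beyond bookkeeping is making sure the reduction to the cone is legitimate, but this is exactly what the lemma delivers, so there is no real obstacle here — the proposition is stated as a corollary for this reason. The one point I would be careful about is that Example \ref{SonCone} is stated for $\la\in\C\setminus[0,\infty)$, while here we approach along the negative real axis; since $S(\la)$ is analytic on $\C\setminus[0,\infty)$ and the negative reals lie in that set, the substitution $\la=-|\la|$ is simply evaluation at an interior point and causes no difficulty. I would close by remarking that, because the lemma asserts both identities may be differentiated in $\la$, the asymptotic expansions above likewise hold after differentiation, which is what will be needed when analyzing $D(\la)=\det(P+QS(\la))$ in the proof of Theorem \ref{thm:intromain}.
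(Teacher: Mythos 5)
Your proposal is correct and is essentially the paper's own argument: the paper derives the proposition as an immediate corollary of the localization lemma together with the explicit cone $S$-matrix of Example \ref{SonCone}, exactly as you do. (One harmless slip: $|\nu|=2\pi k/\theta_p$ need not be rational since $\theta_p$ is arbitrary; what matters, and what you in fact use, is that $0<|\nu|<1$, so the Gamma factors are finite and nonzero.)
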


\begin{remk}\label{rk:difforbit}
It would be interesting to study the asymptotic behavior of $S(\pm i |\lambda|).$ It is then expected
to see contributions of periodic diffractive orbits (compare with \cite{Hillairet}).
\end{remk}

\subsection{Explicit formulas for $S(0)$}\label{sec:exfor}
In this subsection we will show that for $\nu\neq 0$ the coefficient
$S_{\mu\nu}(\la)$ is continuous at $\la=0$ and may be expressed using
standard objects of the Riemannian surface $X.$

Recall that, in the distinguished local parameter $\zeta$ near
some conical point $P$ the asymptotic expansion was given in \refeq{eq:defabdlp}.
It follows that we have

\begin{equation*}
\left \{
\begin{array}{lr}

F_\nu(\zeta,\overline{\zeta}) \sim C_\nu \zeta^{-k} & k>0,\\
F_\nu(\zeta,\overline{\zeta}) \sim C_\nu \overline{\zeta}^k & k<0,
\end{array}
\right .
\end{equation*}
where, as usual $\nu$ and $k$ are related by the relation $\nu\, = \, \frac{2\pi}{\theta_p}\cdot k.$

We first prove the following lemma.

\begin{lem}
If $\nu\neq 0$ then $G_\nu(\cdot; \la)$ is continuous at $\la=0$
and $G_\nu(\cdot;0)$ is a harmonic $L^2$ function on
$X$ such that
\begin{equation*}
\left \{
\begin{array}{lr}
G_\nu(\zeta,\overline{\zeta} ; 0)\,=\,\zeta^{-k}\,+\,O(1) & k>0,\\
G_\nu(\zeta,\overline{\zeta};0)\,=\, \overline{\zeta}^k\,+\,O(1) & k<0.
\end{array}
\right .
\end{equation*}
\end{lem}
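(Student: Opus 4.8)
The plan is to exploit the decomposition $G_\nu(\cdot\,;\la)=F_\nu(\cdot)+g_\nu(\cdot\,;\la)$ from the proposition, with $g_\nu(\cdot\,;\la)=-(\dF-\la)^{-1}f_\nu(\cdot\,;\la)$ and $f_\nu(\cdot\,;\la)=(\Delta^*-\la)F_\nu=\Delta^*F_\nu-\la F_\nu$. First I would record two elementary facts. Since $F_\nu=C_\nu r^{-|\nu|}\exp(i\nu\theta)\rho$ with $r^{-|\nu|}\exp(i\nu\theta)$ harmonic, the function $h_\nu:=\Delta^*F_\nu$ is smooth and compactly supported away from the conical points (the Laplacian falls only on the cut-off $\rho$), and is in particular $\la$-independent. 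Moreover, every $\nu\in N_p$ satisfies $|\nu|=\tfrac{2\pi}{\theta_p}|k|<1$, so $\int r^{1-2|\nu|}\,dr$ converges near $p$ and $F_\nu\in L^2$. Hence $f_\nu(\cdot\,;\la)=h_\nu-\la F_\nu$ is an affine, hence analytic, $L^2$-valued function of $\la$ with limit $h_\nu$ at $\la=0$. The only possible obstruction to continuity of $g_\nu$ at $\la=0$ is therefore the pole of $(\dF-\la)^{-1}$ at $\la=0$, coming from $0\in\spec(\dF)$ with eigenspace the constants.

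The heart of the argument, and the main obstacle, is to show this pole does not actually produce a singularity. Write $\Pi_0$ for the orthogonal projection onto $\ker\dF=\C\cdot\mathbf 1$ and split
\[
(\dF-\la)^{-1}f_\nu=-\tfrac{1}{\la}\Pi_0 f_\nu+(\dF-\la)^{-1}(1-\Pi_0)f_\nu,
\]
where the second term is analytic in $\la$ near $0$ (on $(\ker\dF)^\perp$ the spectrum is bounded below by $\la_1>0$). The potentially singular term is $-\tfrac{1}{\la}\tfrac{\langle f_\nu,\mathbf 1\rangle}{\|\mathbf 1\|^2}\mathbf 1$, so I must control $\langle f_\nu(\cdot\,;\la),\mathbf 1\rangle=\langle h_\nu,\mathbf 1\rangle-\la\langle F_\nu,\mathbf 1\rangle$. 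The key computation is that $\langle h_\nu,\mathbf 1\rangle=\langle\Delta^*F_\nu,\mathbf 1\rangle=0$: applying Green's formula \refeq{Green1} to $u=F_\nu$ and $v=\mathbf 1$, the boundary sum vanishes because the constant $\mathbf 1$ has $b^-_0=0$ and $b^\pm_\mu=0$ for $\mu\neq 0$, while $F_\nu$ (being purely angular with $\nu\neq 0$) has $a^-_0=0$; since $\Delta^*\mathbf 1=0$ this forces $\langle\Delta^*F_\nu,\mathbf 1\rangle=0$. Consequently $\langle f_\nu(\cdot\,;\la),\mathbf 1\rangle=-\la\langle F_\nu,\mathbf 1\rangle$, and the singular term equals the $\la$-independent vector $\tfrac{\langle F_\nu,\mathbf 1\rangle}{\|\mathbf 1\|^2}\mathbf 1$. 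Thus $g_\nu(\cdot\,;\la)$, and therefore $G_\nu(\cdot\,;\la)$, extends analytically (in particular continuously) to $\la=0$ in $H^2$.

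It then remains to identify $G_\nu(\cdot\,;0)$. Harmonicity follows by passing to the limit in $(\Delta^*-\la)G_\nu(\cdot\,;\la)=0$: since $G_\nu(\cdot\,;\la)\to G_\nu(\cdot\,;0)$ and $\la G_\nu(\cdot\,;\la)\to 0$ in $L^2$, closedness of $\Delta^*$ gives $G_\nu(\cdot\,;0)\in\dom(\Delta^*)$ and $\Delta^*G_\nu(\cdot\,;0)=0$. The membership $G_\nu(\cdot\,;0)\in L^2$ is immediate, as $F_\nu\in L^2$ and $g_\nu(\cdot\,;0)\in H^2\subset L^2$. Finally, for the asymptotics I would note that the remainder $g_\nu(\cdot\,;0)$ lies in the Friedrichs domain $H^2$, so all its $a^-_\mu$ coefficients (including the logarithmic one) vanish; its expansion near $p$ contains only the constant and the positive powers $r^{|\mu|}\exp(i\mu\theta)$, hence $g_\nu(\cdot\,;0)=O(1)$ there. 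Therefore the entire singular part of $G_\nu(\cdot\,;0)$ is that of $F_\nu$, which is $\zeta^{-k}$ for $k>0$ and $\overline{\zeta}^{k}$ for $k<0$ in the distinguished parameter, yielding the claimed expansions.
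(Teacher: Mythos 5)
Your proof is correct and takes essentially the same route as the paper: the decomposition $G_\nu=F_\nu+g_\nu$, the fact that $\ker(\dF)$ consists only of the constants, and orthogonality of the data to that kernel to show the resolvent pole at $\la=0$ is harmless, followed by passing to the limit for harmonicity and reading off the singular part from $F_\nu$. The only (minor) difference is that the paper asserts outright that $\int_X(\Delta^*-\la)F_\nu\,dx=0$ (both terms vanish by angular integration since $\nu\neq 0$), whereas you prove $\langle\Delta^*F_\nu,\mathbf{1}\rangle=0$ via Green's formula and absorb the remaining $-\la\langle F_\nu,\mathbf{1}\rangle$ contribution into the explicit factor of $\la$, a slightly more careful rendering of the same step.
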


\begin{proof}
Recall that we have set $G_\nu(\cdot ;\la)\,=\, F_\nu+g_\nu(\cdot ; \la),$
where $g_\nu(\cdot;\la)$ is the unique solution to
\[
(\Delta_F-\la)\,g_\nu(\cdot;\la)\,=\,-\left( \Delta^* -\la\right)F_\nu.
\]
Since $\dis\int_X \left( \Delta^* -\la\right)F_\nu \,dx\,=\,0$ the continuity
at $0$ follows from the fact that the $\ker(\Delta_F)$ consists only in the
constant function. By continuity we obtain $G_\nu(\cdot\,;0)$ is a solution to
$\Delta^* G_\nu(\cdot\,;0)=0$ and, therefore, $G_\nu(\cdot\,;0)$ is
harmonic on $X_0.$
\end{proof}

\begin{remk}
Let $\zeta$ be denoting the distinguished local parameter near a fixed $p\in P$. The problems
\begin{equation}\label{dir1}
\begin{cases}\Delta U_k=0\ \ \ {\text on}\ \ \ X\setminus P\\
U_k\sim \zeta^{-k}+O(1),\  \text{as} \ \ \zeta\to 0
\end{cases}
\end{equation}

for $0 < k < \frac{\theta_p}{2\pi}$ and

\begin{equation}\label{dir2}
\begin{cases}\Delta U_k=0\ \ \ {\text on}\ \ \ X\setminus P\\
U_k \sim \overline \zeta^k+O(1),\  \text{as} \ \ \zeta\to 0\\
\end{cases}
\end{equation}
for $-\frac{\theta_p}{2\pi} < k <0$ have solutions only up
to an additive constant.
On the other hand, the problem
$$
\begin{cases}\Delta u=0\ \ \ {\text on}\ \ \ X\setminus P\\
u\sim \log r+O(1),\  \text{as} \ \ \zeta\to 0
\end{cases}
$$
doesn't have a solution. Thus the behaviour of the coefficients $S_{0\nu}(\lambda)$ and $S_{\mu0}(\lambda)$
may not even be properly defined for $\lambda=0.$ When writing  $S(0)$ we will implicitly
assume that only the coefficients $S_{\mu\nu}$ with nonzero $\mu$ and $\nu$ are considered (see also remark \ref{rk:DregL}).
\end{remk}

In the next subsection we construct solutions to the problems
(\ref{dir1}, \ref{dir2}) since they give the functions
$G_\nu( \cdot\,;\,0)$ from which the coefficients $S_{\mu\nu}$
can be computed (for nonzero $\mu$ and $\nu$).

\subsection{Special solutions and an explicit expression for $S(0)$}\label{sec:S0}
Choose a canonical basis of cycles, $\{a_\alpha, b_\alpha\}$ on
the Riemann surface $X$ and let $\{v_\alpha\}_{\alpha=1, \dots, g}$ be the
corresponding basis  of holomorphic normalized differentials. Let
${\mathbb B}$ be the matrix of $b$-periods of $X$.

We have the following proposition.

\begin{prop}\label{specresh}
Fixing $P$ a conical point and $k\in {\mathbb N},$ there exist $\Omega_k$
and $\Sigma_k$ such that
\begin{enumerate}
\item $\Omega_k$ and $\Sigma_k$ are meromorphic differentials
of the second kind on $X$ with only one pole of order $k+1$ at $P.$
\item In the distinguished local parameter near $P$, they satisfy
\begin{equation}
\label{genSch}
\left\{
\begin{array}{c}
\Omega_k(\zeta)=-\frac{k}{\zeta^{k+1}}d\zeta+O(1) \\
\\
\Sigma_k(\zeta)=-\frac{ik}{\zeta^{k+1}}d\zeta+O(1).
\end{array}
\right .
\end{equation}
\item All the $a$ and $b$-periods of $\Omega_k(P,\cdot)$ and $\Sigma_k(P, \cdot)$ are purely
imaginary.
\end{enumerate}
\end{prop}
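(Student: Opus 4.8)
The plan is to construct $\Omega_k$ and $\Sigma_k$ explicitly as combinations of a fixed meromorphic differential with the prescribed singularity and a holomorphic correction chosen to enforce the reality condition (3). First I would invoke the standard existence theory for differentials of the second kind on a compact Riemann surface: for a fixed point $P$ and integer $k\geq 1$, there exists a meromorphic differential $\eta_k$ with a single pole of order $k+1$ at $P$, whose principal part in the distinguished local parameter $\zeta$ is exactly $-k\zeta^{-k-1}d\zeta$ (and no lower-order poles), determined uniquely once we normalize, say, all its $a$-periods to vanish. This $\eta_k$ realizes condition (1) and the prescribed leading singularity in (2); the $O(1)$ remainder in (2) is automatic since the only pole is at $P$ and the next term in the Laurent expansion is a regular $1$-form. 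The same construction with principal part $-ik\zeta^{-k-1}d\zeta$ produces a candidate for $\Sigma_k$ — indeed, over $\C$ one may simply take the $a$-normalized $\eta_k$ and set the $\Sigma$-candidate to be $i\eta_k$, since multiplying the principal part by $i$ multiplies the whole differential by $i$.

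The substance of the proof is arranging condition (3), that all periods be purely imaginary. With $a$-periods normalized to zero, I still need to correct the $b$-periods. Here I would add a holomorphic differential $\sum_\alpha c_\alpha v_\alpha$: since the $v_\alpha$ have $a$-periods $\delta_{\alpha\beta}$, adding such a term shifts the $a$-periods by $c_\beta$ and the $b$-periods by $(\mathbb{B}c)_\beta$. The Riemann bilinear relations give the $b$-periods of $\eta_k$ in terms of the local expansion of $v_\alpha$ at $P$: concretely $\oint_{b_\alpha}\eta_k$ is, up to a constant, the $(k-1)$-th Taylor coefficient of $v_\alpha/d\zeta$ at $P$. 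The requirement that $\Omega_k=\eta_k+\sum_\alpha c_\alpha v_\alpha$ have purely imaginary $a$- and $b$-periods becomes a real-linear system for the real and imaginary parts of the $c_\alpha$; since $\mathbb{B}$ is symmetric with positive-definite imaginary part (Riemann's relations), the relevant real $2g\times 2g$ matrix built from $\mathrm{Im}\,\mathbb{B}$ is invertible, so a unique $c_\alpha$ exists making all periods imaginary. I would carry out the analogous correction for $\Sigma_k$.

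I expect the reality/period-matching step to be the main obstacle, because it is where the geometry of $X$ genuinely enters and where care is needed to keep the bookkeeping correct. The key points to verify are: that imposing ``$a$-periods purely imaginary'' and ``$b$-periods purely imaginary'' simultaneously is solvable, which reduces to the invertibility of $\mathrm{Im}\,\mathbb{B}$ — guaranteed by Riemann's bilinear relations; and that adding a holomorphic differential does not disturb the pole structure (1) or the leading singularity (2), which is clear since holomorphic differentials are regular everywhere. One subtlety I would flag is that the distinguished local parameter $\zeta$ is only defined up to the rotations fixing the cone metric, so the principal parts in \eqref{genSch} are well-defined only up to a unimodular factor; since this affects $\Omega_k$ and $\Sigma_k$ by an overall constant phase it does not interfere with the reality of periods being achievable, but it should be acknowledged. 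Once existence of the corrected differentials is established, all three claimed properties hold simultaneously, completing the proof.
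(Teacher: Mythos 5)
Your proposal is correct and follows essentially the same route as the paper: seed with an $a$-normalized differential of the second kind having the prescribed principal part, add a holomorphic correction $\sum_\alpha c_\alpha v_\alpha$, and observe that forcing all $a$- and $b$-periods to be purely imaginary reduces to a real linear system governed by $\mathrm{Im}\,\mathbb{B}$, which is invertible by the Riemann bilinear relations. The only difference is cosmetic: the paper realizes your abstract $\eta_k$ concretely as (derivatives of) the canonical bidifferential $\omega(\cdot,\zeta)/d\zeta\big|_{\zeta=0}$, a choice made so that the same construction later yields the explicit formulas for $S(0)$ in terms of the Bergman projective connection.
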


\begin{proof}
Let $\omega(\cdot, \cdot)$ be the canonical meromorphic bidifferential
on the Riemann surface $X$ (see \cite{Fay}, p. 3), for which the following
asymptotic expansion holds
$$\frac{\omega(\zeta(Q_1),
\zeta(Q_2))}{d\zeta(Q_1)d\zeta(Q_2)}=\frac{1}{(\zeta(Q_1)-\zeta(Q_2))^2}+\frac{1}{6}S_B(\zeta(Q_2))+o(1)$$
as $Q_1\to Q_2$, where $S_B$ is the Bergman projective connection.
Moreover, $\omega$ is normalized in such a way that
\begin{equation}\label{normomega}
\left\{
\begin{array}{l}
\displaystyle \oint_{a_\alpha}\frac{\omega(\cdot,
\zeta)}{d\zeta}\Big|_{\zeta=0}=0 \\
\\
\displaystyle \oint_{b_\alpha}\frac{\omega(\cdot,
\zeta)}{d\zeta}\Big|_{\zeta=0}=2\pi i
\frac{v_\alpha(\zeta)}{d\zeta}\Big|_{\zeta=0}\,,
\end{array}
\right .
\end{equation}
for $\alpha=1, \cdots, g$.

Let $(c_\alp)_{\alp=1\cdots g}$ be coefficients to be chosen later
and consider the meromorphic differential
\begin{equation}\label{differ}
-\frac{\omega(\cdot,
\zeta)}{d\zeta}\Big|_{\zeta=0}+\sum_{\alpha=1}^gc_\alpha
v_\alpha\,.\end{equation}

We want to choose $c_\alp$ in (\ref{differ}) so that all the $a$- and $b$-periods of this differential
are purely imaginary. The vanishing of the real parts of all $a$-periods
implies that all the constants $c_\alpha$ are purely imaginary.
The vanishing of the real part of the period over the cycle $b_\beta$ then gives :
\[
\text {Re}\,\left( \oint_{b_\beta} \sum c_\alp v_\alp \right)\,=\,
\text{Re}\,\left( \oint_{b_\beta}\frac{\omega(\cdot,
\zeta)}{d\zeta}\Big|_{\zeta=0} \right).
\]
Using the fact that the $c_\alp$ are known to be purely imaginary and the normalization
of $\omega$ recalled in \refeq{normomega} we obtain the following system of equations :
\begin{equation}\label{defcalpha}
\sum_{\alp=1}^g [\text{Im}\, \B]_{\beta \alp}c_\alp \,=\,2\pi i
\text{Im}\,\left( \frac{v_\beta}{d\zeta}\Big|_{\zeta=0}\right)
\end{equation}
Since $\text{Im}\,(\B)$ is invertible, this uniquely determines $c_\alp.$

In order to get $\Sigma_1$ we apply the same method searching coefficients
$c_\alp$ so that the meromorphic differential
\[
-i\frac{\omega(\cdot,\zeta)}{d\zeta}\,+\,\sum_{\alpha=1}^g c_\alpha v_\alpha
\]
has purely imaginary periods. The system of equations we obtain
is similar to \refeq{defcalpha} except that $\text{Im}\,\left( \frac{v_\beta}{d\zeta}\Big|_{\zeta=0}\right)$
is replaced by $\text{Re}\,\left( \frac{v_\beta}{d\zeta}\Big|_{\zeta=0}\right).$
It still has a solution using the same invertibility of $\text{Im}\,(\B).$

To get $\Omega_k$ and $\Sigma_k$ with an arbitrary
$k\geq 1$ we repeat the same construction taking the first
term in (\ref{differ}) to be
\[
\frac{(-1)^k}{(k-1)!}\left[\frac{d}{d\zeta}\right]^{k-1}\frac{\omega(\cdot,
\zeta)}{d\zeta}\Big|_{\zeta=0}\,.
\]
We will obtain an equation similar to \refeq{defcalpha} so that
eventually, the existence result thus follows from the existence of $\omega$ and the
fact that the matrix $\text{Im}\,({\mathbb B})$ is invertible.
\end{proof}

This proposition gives the following corollary.

\begin{coro}
Let $\Omega_k$ and $\Sigma_k$ be defined by the preceding proposition, then the following
formula defines a function $f_k$ which is harmonic in $X\setminus\{ P\} :$
\begin{equation}
f_k(Q)\,=\, \text{Re}\,\left\{ \int_{P_0}^Q \Omega_k\right\}-i\text{Re}\,\left\{ \int_{P_0}^Q \Sigma_k\right\}.
\end{equation}
Moreover, in the distinguished local parameter near $P,$ $f_k$ admits the following
asymptotic behavior :
\[
f_k(\zeta)\,=\,\frac{1}{\zeta^k}+O(1).\]
\end{coro}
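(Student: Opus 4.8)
**The plan is to construct $f_k$ directly from the differentials $\Omega_k$ and $\Sigma_k$ provided by the preceding proposition and to verify three things: that the formula makes sense (i.e. the two combinations of path-integrals are single-valued on $X\setminus\{P\}$), that the resulting function is harmonic, and that it has the prescribed asymptotic behaviour $\zeta^{-k}+O(1)$ near $P$.**

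First I would address single-valuedness, which is really the content of why purely imaginary periods were demanded in the proposition. The real part $\Re\{\int_{P_0}^Q\Omega_k\}$ is well-defined modulo the real parts of the periods of $\Omega_k$ along closed cycles; since all $a$- and $b$-periods of $\Omega_k$ are purely imaginary by part (3) of Proposition \ref{specresh}, these real parts vanish and the integral gives a genuine single-valued function on $X\setminus\{P\}$ (the point $P$ being excluded because $\Omega_k$ has a pole there). The same argument applies to $\Re\{\int_{P_0}^Q\Sigma_k\}$, so the combination $f_k(Q)=\Re\{\int_{P_0}^Q\Omega_k\}-i\,\Re\{\int_{P_0}^Q\Sigma_k\}$ is a well-defined single-valued complex function on $X\setminus\{P\}$ (independent of the path, hence of the base point $P_0$ up to an additive constant).

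Next I would check harmonicity. Away from $P$, write $\Omega_k=\varphi_k\,d\zeta$ locally with $\varphi_k$ holomorphic, so that $\int^Q\Omega_k$ is (locally) a holomorphic primitive; the real part of a holomorphic function is harmonic, and multiplying a harmonic function by the constant $-i$ and adding keeps us in the (complex-valued) harmonic functions. Thus each of the two summands is harmonic on $X\setminus\{P\}$, and so is their linear combination $f_k$. This step is essentially automatic from the fact that $\Omega_k,\Sigma_k$ are closed holomorphic $1$-forms off $P$, so I would keep it brief.

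The step I expect to require the most care is the asymptotic behaviour near $P$, where I would use part (2) of the proposition. In the distinguished local parameter $\zeta$ one has $\Omega_k=\bigl(-k\zeta^{-k-1}+O(1)\bigr)d\zeta$, whose primitive is $\zeta^{-k}+O(\zeta)+\text{const}$, and similarly $\Sigma_k$ integrates to $i\zeta^{-k}+O(\zeta)+\text{const}$. Hence $\int^Q\Omega_k=\zeta^{-k}+O(1)$ and $-i\int^Q\Sigma_k=-i\cdot i\zeta^{-k}+O(1)=\zeta^{-k}+O(1)$. The delicate point is that $f_k$ is built from the \emph{real parts} of these primitives, not the primitives themselves, so I must track how $\Re$ interacts with the singular term $\zeta^{-k}$: writing $\zeta^{-k}=\Re(\zeta^{-k})+i\,\Im(\zeta^{-k})$, the contribution of $\Omega_k$ gives $\Re(\zeta^{-k})$ while the contribution of $\Sigma_k$, after multiplication by $-i$, supplies exactly the missing $i\,\Im(\zeta^{-k})$, so that the two singular pieces recombine into the full $\zeta^{-k}+O(1)$. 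Verifying that the real and imaginary parts assemble correctly — and that the lower-order $O(1)$ terms, including the possible logarithmic ambiguity from the residue, cause no trouble because $\Omega_k,\Sigma_k$ are of the second kind (residue-free) — is the heart of the computation and the place I would slow down to confirm the bookkeeping.
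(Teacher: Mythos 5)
Your proposal is correct and follows essentially the same route as the paper: well-definedness from the purely imaginary $a$- and $b$-periods, harmonicity since the primitives are locally holomorphic off $P$, and the asymptotics by integrating the local expansions of $\Omega_k$ and $\Sigma_k$, with the real and imaginary parts recombining into $\zeta^{-k}$. The paper compresses all of this into two sentences ("well-defined by the period condition; the rest follows from the construction"), so your write-up is simply a detailed version of the same argument, including the correct observation that the second-kind (residue-free) property rules out logarithmic terms.
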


\begin{proof}
Since all the $a-$ and $b-$ periods of $\Omega$ and $\Sigma$ are purely imaginary,
$f_k$ is indeed well-defined on $X.$ The remaining statements follow from the construction.
\end{proof}

By considering $C_\nu f_k$ or $C_\nu \overline{f_k}$ we obtain the functions $G_\nu(\cdot;0)$ up to an additive constant.
This additive constant is harmless when computing the matrix elements $S_{\mu\nu}(0).$

\subsubsection{Examples}
\begin{enumerate}
\item {\bf A conical point of angle $2\pi<\beta\leq 4\pi$ on a Euclidean surface of genus $\geq 1$.}
In this case one has $n=1.$

The proposition \ref{specresh} combined with the asymptotics of $\omega$ yield
\begin{equation}\label{A}
\begin{split}
 \int_{P_0}^\zeta\Omega_1(P,
\cdot) = \frac{1}{\zeta}+c_0 + & \left[-\frac{1}{6}S_{B}(\zeta)\Big|_{\zeta=0}+2\pi
i\sum_{\alpha=1, \beta=1}^g((\text{Im}\, {\mathbb
B})^{-1})_{\alpha\beta}\text{Im}\,\left\{\frac{v_\beta(\zeta)}{d\zeta}\Big|_{\zeta=0}\right\}\frac{v_\alpha(\zeta)}{d\zeta}
\Big|_{\zeta=0}\right]\zeta\\
& +O(\zeta^2)
\end{split}
\end{equation}
with some constant $c_0,$ and
\begin{equation}\label{B}
\begin{split}
\int_{P_0}^\zeta\Sigma_1(P,
\cdot)=\frac{i}{\zeta}+ d_0+&\left[-\frac{i}{6}S_{B}(\zeta)\Big|_{\zeta=0}+2\pi
i\sum_{\alpha=1, \beta=1}^g((\text{Im}\, {\mathbb
B})^{-1})_{\alpha\beta}\text{Re}\,\left\{\frac{v_\beta(\zeta)}{d\zeta}\Big|_{\zeta=0}\right\}\frac{v_\alpha(\zeta)}{d\zeta}\Big|_{\zeta=0}\right]\zeta\\
&+O(\zeta^2)
\end{split}
\end{equation}
with some constant $d_0$.

Denoting the expressions in square brackets in (\ref{A}) and
(\ref{B}) by $A$ and $B$ respectively, one gets the asymptotics

$$f_1(\zeta, \overline \zeta)=\frac{1}{\zeta}+{\rm
const}+\frac{A-iB}{2}\zeta+\frac{\overline A-i\overline
B}{2}\overline\zeta+O(|\zeta|^2)$$ and, therefore,

\begin{equation}\label{scattering}
S_p(0)=\left(\begin{matrix} * \ \ \ \ \    * \ \ \ \ \     *\\
* \ \  \frac{A-iB}{2}\     \frac{\overline A-i\overline B}{2}\\
*  \ \ \frac{A+iB}{2} \    \frac{\overline A+i \overline B}{2} \end{matrix} \right) ,\end{equation}

where the index $p$ means that we have written down only the coefficients of $S(0)$ that
corresponds to indices $\nu\in N_p$

\item {\bf A Euclidean sphere with one $4\pi$ singularity and six $\pi$ singularities.}
Consider the surface of example \ref{eg:spherepant} i.e. the Riemann sphere
with  metric
$$\frac{|z|^2|dz|^2}{\prod_{k=1}^6|z-z_k|}.$$

We consider the part of the $S$-matrix with non-zero indices $\mu$ and $\nu.$
We thus only have to consider the asymptotic behavior near $0$ and compute the
coefficients $S_{\und \und},~S_{-\und-\und},~S_{-\und \und}$ and $S_{\und -\und}.$

The distinguished local parameter $\zeta$  in a vicinity of the conical point $z=0$
is given by
$$\zeta(z)=\left(\int_0^z\frac{w\,dw}{\sqrt{\prod_{k=1}^6(w-z_k)}}\right)^{1/2}\,.$$

The special solution
$f_1$ is now not only  harmonic but even holomorphic in ${\mathbb C}P^1\setminus 0$
and is nothing but the function $A/z$ with some constant $A$.

One has
$$\frac{A}{z}=\frac{1}{\zeta}+{\rm const} +S_{\und \und}(0)\zeta+O(\zeta^2),$$
Therefore, $A=\frac{dz}{d\zeta}\Big|_{\zeta=0}$ and a simple calculation shows that
$$S_{\und \und}(0)=-\frac{1}{6}\frac{z'''(\zeta)z'(\zeta)-\frac{3}{2}(z''(\zeta))^2}{(z'(\zeta)^2}\Big|_{\zeta=0}=
-\frac{1}{6}\{z, \zeta\}|_{\zeta=0}\,,$$
where $\{z, \zeta\}$ is the Schwarzian derivative.
One has also $S_{-\und-\und}(0)=\overline{S_{\und \und}(0)}$ and
$S_{\und-\und}(0)=S_{-\und \und}(0)=0$.

In the very symmetric case where the $z_k$ form a regular hexagon, the computation yields that
$z = c\cdot \zeta(1+O(\zeta^6))$ so that $S_{\und \und}$ and $S_{-\und -\und}$ also vanish.
\end{enumerate}

\subsection{$S$-matrix as a function on the moduli space of
holomorphic differentials: variational formulas}
Let $(X, \omega)\in H_g(k_1, \dots, k_M)$ and let $S(\lambda)$ be the $S$-matrix
corresponding to a conical point of the translation surface $(X, |\omega|^2)$
(i. e. one of the zeros of the holomorphic one-form $\omega$). Here we derive
the variational formulas for $S(\lambda)$ with respect to Kontsevich-Zorich
homological coordinates on $H_g(k_1, \dots, k_M)$.
\begin{prop} Let $z(p)=\int^p\omega$. Introduce the following (closed) (1-1)-form on $X_0$:
$$\Theta_{\mu \nu}=[{G_\mu( z; {\lambda})}]_{z\overline z}G_\nu(z ; \lambda)d\overline z+
[{G_\mu(z ; {\lambda})}]_z[G_\nu( z ; \lambda)]_zdz\,,$$
Then the variational formulas hold:
\begin{equation} \label{e1}\frac{\partial S_{\mu \nu}(\lambda)}{\partial A_\alpha}
=2i\oint_{b_\alpha}\Theta_{\mu \nu}; \ \ \ \ \alpha=1, \dots, g,\end{equation}
\begin{equation}\frac{\partial S_{\mu \nu}(\lambda)}{\partial B_\alpha}%
=-2i\oint_{a_\alpha}\Theta_{\mu \nu}; \ \ \ \ \alpha=1, \dots, g,\end{equation}
\begin{equation}\label{e3}\frac{\partial S_{\mu \nu}(\lambda)}{\partial z_k}%
=2i\oint_{p_k}\Theta_{\mu \nu}; \ \ \ \ k=2, \dots, M,\end{equation}
where the integrals in (\ref{e3}) are taken over some small contours encircling conical points $p_k$.
\end{prop}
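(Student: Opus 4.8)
The plan is to combine two ingredients already available in the paper: the derivative formula for the $S$-matrix with respect to $\lambda$ (Lemma \ref{lemders}, expressing $\dot S_{\mu\nu}$ as an inner product of the $G_\nu$'s) and the variational formulas of \cite{KokKor} for the relevant geometric objects under changes of the Kontsevich--Zorich coordinates. Since $S_{\mu\nu}(\lambda)=\Lambda_\mu(g_\nu(\cdot\,;\lambda))$ and the $G_\nu$ are obtained from the resolvent kernel (Proposition \ref{prop:aG}), I would first express the variation $\partial_{A_\alpha}S_{\mu\nu}$ as a manifold-derivative of this inner-product type quantity. The key point is that moving along a homological coordinate deforms the flat metric $|\omega|^2$, and hence the operator $\dF$, in a controlled way; the Rauch-type variational calculus tracks how $G_\nu(\cdot\,;\lambda)$ and the integration measure respond.

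\textbf{The main computation.}
First I would fix the global holomorphic coordinate $z(p)=\int^p\omega$, in which the metric is simply $|dz|^2$ away from the zeros, so that the flat Laplacian becomes $4\,\partial_z\partial_{\bar z}$ and the functions $G_\nu(\cdot\,;\lambda)$ solve $(\Delta-\lambda)G_\nu=0$ on $X_0$. The differential $\Theta_{\mu\nu}$ is built precisely from the first derivatives of these solutions, so I expect it to arise as the integrand when one differentiates the defining relation for $S_{\mu\nu}$ and integrates by parts. Concretely, I would differentiate the identity $(\Delta-\lambda)G_\nu=0$ with respect to a coordinate, use that the variation of the metric in the $z$-coordinate is encoded by a vector field whose support localizes the deformation, and apply Green's formula on $X_0$ (respecting the conical singularities as in \refeq{Green1}). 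The boundary terms at the conical points are governed by the prescribed singular behavior $\zeta^{-k}$ or $\bar\zeta^{\,k}$ of $G_\nu(\cdot\,;0)$, and they should collapse, via Stokes' theorem, to period integrals of the closed form $\Theta_{\mu\nu}$ over the cycles $b_\alpha$, $a_\alpha$ dual to the coordinate being varied. That $\Theta_{\mu\nu}$ is closed on $X_0$ is what makes the period integrals well-defined and independent of the representative of the homology class; I would verify closedness directly by computing $d\Theta_{\mu\nu}$ and using $\partial_z\partial_{\bar z}G_\nu=\tfrac{\lambda}{4}G_\nu$ together with the corresponding equation for $G_\mu$, so that the two terms cancel.

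\textbf{Matching the coefficients and the contour case.}
The precise constants $\pm 2i$ and the appearance of the conjugate cycle (a $b$-period for an $A$-variation, an $a$-period for a $B$-variation) follow from the Riemann bilinear relations and the normalization $A_\alpha=\oint_{a_\alpha}\omega$, $B_\alpha=\oint_{b_\alpha}\omega$; differentiating with respect to $A_\alpha$ changes the period along $a_\alpha$, which by duality produces an integral over $b_\alpha$, with the factor of $i$ coming from the complex structure (the Cauchy--Riemann relation between $dz$ and $d\bar z$). For formula \refeq{e3}, where the varied coordinate $z_k=\int_{\gamma_k}\omega$ is a relative period connecting conical points, the deformation is supported near $p_k$, so the same integration-by-parts argument localizes the boundary contribution to a small loop encircling $p_k$, yielding the residue-type integral $\oint_{p_k}\Theta_{\mu\nu}$.

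\textbf{The expected obstacle.}
The delicate step is the interchange of the manifold-variation $\partial/\partial A_\alpha$ with the integrations and the resolvent, near the conical points where $G_\nu$ is genuinely singular. One must justify that the variation of the (non-smooth, singular) metric keeps $G_\nu(\cdot\,;\lambda)$ in the appropriate domain and that the boundary terms at the cone tips are exactly the periods of $\Theta_{\mu\nu}$ and not extra local contributions. I expect the cleanest route is to work entirely in the flat $z$-coordinate, where the Kontsevich--Zorich coordinates act by affine cut-and-paste on the polygonal model and the metric variation becomes a shift of identifications rather than a change of the local differential operator; then the variational formulas of \cite{KokKor} apply almost verbatim, and the remaining work is the bookkeeping that converts their output into the period integrals of $\Theta_{\mu\nu}$.
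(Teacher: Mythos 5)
Your list of ingredients is essentially the right one --- the variational machinery of \cite{KokKor}, the identification of $G_\mu$ with a coefficient of the resolvent kernel (Proposition \ref{prop:aG}), and closedness of $\Theta_{\mu\nu}$ via $\partial_z\partial_{\bar z}G=\frac{\lambda}{4}G$ --- but the mechanism you propose for producing the period integrals is not the correct one, and this is where the actual content of the proof lies. You claim that the integrals $\oint_{b_\alpha}\Theta_{\mu\nu}$ arise as boundary terms at the conical points which ``collapse, via Stokes' theorem, to period integrals.'' This cannot work: boundary terms at the cone tips are integrals over small circles around the points of $P$, and such a circle is not homologous to $b_\alpha$; contributions of that type are precisely what produces the localized formula \refeq{e3} for the relative periods $z_k$, not the formulas for $A_\alpha$ and $B_\alpha$. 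Moreover, under a homological deformation the local operator in the flat coordinate does not change at all (it is $4\partial_z\partial_{\bar z}$ before and after), so differentiating $(\Delta-\lambda)G_\nu=0$ naively yields only that $\partial_{A_\alpha}G_\nu$ is again a solution away from the cut; all the information sits in the jump of this derivative across the cycle dual to the varied coordinate, and it is exactly this jump structure that the two results of \cite{KokKor} used in the paper encapsulate. Neither of them can be replaced by Green's formula at the conical points.

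Concretely, the paper's proof runs: (i) write $g_\nu(x;\lambda)=-\iint_X R(x,y;\lambda)(\Delta-\lambda)F_\nu(y)\,dy$ and note $\partial_{A_\alpha}G_\nu=\partial_{A_\alpha}g_\nu$ because $F_\nu$ and its support are moduli-independent; (ii) apply Lemma 7 of \cite{KokKor} (differentiating a moduli-dependent surface integral produces an extra contour term $\frac{i}{2}\oint_{b_\alpha}\Phi\,d\bar x$) together with Proposition 2 of \cite{KokKor}, i.e.\ the variational formula for the resolvent kernel itself,
\begin{equation*}
\partial_{A_\alpha}R(x,y;\lambda)=2i\oint_{b_\alpha}\left[R(x,z;\lambda)R_{z\bar z}(y,z;\lambda)\,d\bar z+R_z(x,z;\lambda)R_z(y,z;\lambda)\,dz\right];
\end{equation*}
(iii) exchange the order of integration and recombine the inner integrals into $g_\nu$, hence into $G_\nu$, on the contour (using $R_{z\bar z}=\frac{\lambda}{4}R$); and (iv) extract the $a_\mu^+$ coefficient in the variable $x$ using \refeq{eq:AsymptResolv} and Proposition \ref{prop:aG} ($a_\mu^+(z;\lambda)=G_\mu(z;\lambda)$), which is what converts the resolvent factors into $G_\mu$ and yields exactly $\Theta_{\mu\nu}$. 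In particular the constants $\pm 2i$ are inherited from these two formulas of \cite{KokKor}; they do not follow from the Riemann bilinear relations, which are never invoked. Your final paragraph (``the variational formulas of \cite{KokKor} apply almost verbatim'') points in the right direction, but without identifying that the formula needed is the one for the resolvent kernel, and without the composition and coefficient-extraction computation (i)--(iv), the proposal stops short of a proof.
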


{\bf Proof.}
The method of proof follows closely \cite{KokKor}. We will prove only the variational formulas
with respect to coordinates $A_\alpha$ since the other formulas can be established similarly.

According to \cite{KokKor} (Proposition 2, p. 84) one has
\begin{equation}\label{varres}
\partial_{A_\alpha}R(x, y; \lambda)=2i\oint_{b_\alpha}R(x, z; \lambda)
R_{z\overline z}(y, z; \lambda)d\overline z+
R_z(x, z; \lambda)R_z(y, z; \lambda)\,dz\,.
\end{equation}
(Here $R(x, y; \lambda)$ stands for the resolvent kernel of the Friedrichs extension; one has
$R_{z\overline z}(x, z; \lambda)=\frac{\lambda}{4}R(x, z; \lambda)$. )
On the other hand, by definition of $g_\nu$ we have
\begin{equation}\label{luc11}
g_\nu(x\,;\la)=\,-\iint_X[R(x,y\,;\la)(\Delta-\lambda)F_\nu(y)]dy;
\end{equation}
and
Lemma 7 on page 88 of \cite{KokKor} reads as
\begin{equation}\label{eq:varKK}
\partial_{A_\alpha}\iint_X \Phi(x, \overline x; \text{moduli})dx
=\iint_X \partial_{A_\alpha}\Phi(x, \overline x, \text{moduli})dx+
\frac{i}{2}\oint_{b_\alpha}\Phi(x, \overline x, \text{moduli})d\overline x\,.
\end{equation}

The cycle $b_\alpha$ does not intersect the support of $F_\nu$ and the terms $F_\nu$ and
$(\Delta-\lambda)F_\nu$ are moduli independent, therefore,
$$\partial_{A_\alpha}G_\nu(x\,;\la)=\partial_{A_\alpha}(F_\nu+g_\nu)=
\partial_{A_\alpha} g_\nu(x\,;\la).$$
Using \refeq{luc11} and \refeq{eq:varKK}, we obtain
\begin{equation*}
\begin{split}
\partial_{A_\alpha}G_\nu(x\,;\la) & =%
2i\iint_X dy [(\Delta-\lambda)F_\nu(y)]\oint_{b_\alpha}%
\left\{R(x, z; \lambda)R_{z\overline{z}}(y, z; \lambda)d\overline z+R_z(x, z; \lambda)
R_z(y, z; \lambda)dz\right\}\\
& = 2i\oint_{b_\alpha}R(x, z; \lambda)\left[\iint_X \frac{\lambda}{4}R(y, z; \lambda)
(\Delta-\lambda)F_\nu(y)dy\right]d\overline{z}\\
& \quad +\,2i\oint_{b_{\alpha}}R_z(x, z; \lambda)\left[\iint_X R_z(y, z; \lambda)
(\Delta-\lambda)F_\nu(y)dy\right]dz\\
& = 2i\oint_{b_\alpha}R_{z\overline z}(x, z; \lambda)g_\nu(z ; \lambda)d\overline z + %
R_z(x, z; \lambda)[g_\nu(z ; \lambda)]_zdz\\
& = 2i\oint_{b_\alpha}R_{z\overline z}(x, z; \lambda)G_\nu(z ; \lambda)d\overline z +%
R_z(x, z; \lambda)[G_\nu(z ; \lambda)]_zdz
\end{split}
\end{equation*}

We finally obtain
$$\partial_{A_\alpha} g_\nu(\lambda, x)=2i\oint_{b_\alpha}
R_{z\overline z}(x, z; \lambda)G_\nu(z; \lambda)d\overline z
+R_z(x, z; \lambda)[G_\nu(z ; \lambda)]_zdz.$$
Using proposition \ref{prop:aG} and equation \refeq{eq:AsymptResolv}
to identify the behavior near the conical points of the
different terms we obtain
\[
\partial_{A_\alpha}S_{\mu\nu}\,=\,2i\oint_{b_\alpha}
\left[a^+_\mu(z;\la)\right]_{z\overline{z}}G_{\nu}(z;\la)d\overline{z}%
+\left[a^+_\mu(z;\la)\right]_{z}\left[G_{\nu}(z;\la)\right]_zd{z}.
\]
Using proposition \ref{prop:aG}, this gives the result.

\section{Krein's formula and relative determinants}\label{sec:Kfrd}
There are several ways of defining determinants of operators
acting on an infinite dimensional space. We recall the following two
basic constructions : first a {\em perturbative determinant} when the operator is a
trace-class perturbation of the identity, and
second {\em zeta-regularization} which is used in particular for
Laplacian-like operators.

Both these approaches can also be used to define relative determinants when comparing
two operators $H_0$ and $H_1$ in which one is thought to be a perturbation
of the other. Krein's formula is a classical tool in this setting and usually applies
when the difference $f(H_1)-f(H_0)$ is trace-class for some simple function $f$.
In that case it is possible to define a relative perturbative determinant
(see \cite{Yafaev}). This approach applies well to the case when $H_0$ and $H_1$
are different self-adjoint extensions of a symmetric operator that has finite
deficiency indices. Indeed, in that case the difference of the resolvents is a
finite-rank operator and, moreover, the perturbative determinant is
actually the determinant of a finite dimensional matrix.

We will thus adapt these techniques to our setting. The method is clearly
identified in the literature (see \cite{Yafaev} and also \cite{BFK}) and
the main task here consists in identifying the perturbative determinant
in terms of the boundary condition and the $S$-matrix.

Once this is done, we will use this determinant
to define a zeta-regularization and compare
the determinants that are obtained this way.

\begin{remk}
We insist here that we will actually use the perturbative determinant
to show that zeta-regularization is possible and then to compare the
two definitions of determinants. In particular, all the issues
that are relative to zeta-regularization may be expressed using
the perturbative determinant (when the latter can be defined).
\end{remk}

\subsection{Krein's formula and perturbative determinant}
One convenient way of parametrizing the self-adjoint extensions
of $\Delta$ is by using two matrices $P$ and $Q$ in the following way
(see \cite{KosSch}).

We first construct two vectors $A^{\pm}$ that collect the coefficients
$a^{\pm}_\nu.$ We organize these coefficients so that the first $n_{p_1}$
entries correspond to the first conical point $p_1$ then we put the data
corresponding to the second conical points and so on.

A lagrangian subspace $L$ in $\dom(\Delta^*)/\dom(\Delta)$
can be parametrized by a system of linear equations of the following form :
\[
PA^-+QA^+ =0,
\]
where $P$ and $Q$ are square matrices satisfying
$\mbox{rank}(P,Q)$ is maximal and $P^*Q$ is self-adjoint.
We fix two such matrices and denote by $\dL$ the corresponding
self-adjoint extensions.

 It is
possible to find a basis in which the $n\times 2n$ matrix $(P\,Q)$ has the
following block-decomposition (\cite{KosSch}) :
\begin{equation}\label{eq:Bkdec}
\left(
\begin{array}{cccc}
P_2 & P_3 & Q_1 & 0 \\
0   & P_1 & 0 & 0
\end{array}
\right),
\end{equation}
in which $P_1$ and $Q_1$ are invertible and $L:=Q_1^{-1}P_2$ is self-adjoint.

\begin{defn}\label{def:reg}
We will call an extension $\dL$ {\em regular} if functions
in $\dom(\dL)$ are not allowed to have logarithmic singularities.
Equivalently, $\dL$ is regular if and only if for any $u\in\dom(\dL)$,
and any conical point $p,$ the coefficient $a_{p,0}^-$ of $u$ vanishes.
\end{defn}

The following observation (based on the classical Krein formula) is the key technical result of the present paper.

\begin{prop}\label{prop:difftrace}
For any $\la\in \C\setminus (\spec(\dF)\cup\spec(\dL))$ the following identity
holds :
\[
\Tr\left( (\dL-\la)^{-1}-(\dF-\la)^{-1}\right )\,=\,-\Tr\left( (P+QS(\la))^{-1}Q\dot{S}(\la) \right),
\]
where the dot indicates derivation with respect to $\la.$
\end{prop}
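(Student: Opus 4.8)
The plan is to compute the trace of the difference of resolvents by exploiting the Krein-type formula, which expresses this difference as a finite-rank operator built from the family $\{G_\nu(\cdot\,;\la)\}$. The key objects are already in place: the functions $G_\nu(\cdot\,;\la)$ span the space of solutions to $(\Delta^*-\la)F=0$, their $a^-$-coefficients are $\delta_{\mu\nu}$, and their $a^+$-coefficients are encoded in $S(\la)$. A function $u$ in $\dom(\dL)$ solving $(\dL-\la)u=h$ differs from $(\dF-\la)^{-1}h$ by an element of $\ker(\Delta^*-\la)$, i.e.\ by a combination $\sum_\nu c_\nu G_\nu(\cdot\,;\la)$, where the coefficients $c_\nu$ are fixed by the boundary condition $PA^-+QA^+=0$. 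Since $A^-=(c_\nu)$ and $A^+=S(\la)(c_\nu)$ for such a combination (by the interpretation lemma $A^+=S(\la)A^-$), the matrix controlling the whole difference will be precisely $P+QS(\la)$, whose invertibility off $\spec(\dL)$ is what makes $\la$ a resolvent point.

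\textbf{Key steps.} First I would write the abstract Krein formula in the form
\[
(\dL-\la)^{-1}-(\dF-\la)^{-1}\,=\,-\sum_{\mu,\nu} \left[(P+QS(\la))^{-1}Q\right]_{\nu\mu}\,\langle\,\cdot\,,G_\mu(\cdot\,;\overline{\la})\rangle\,G_\nu(\cdot\,;\la),
\]
so that the operator is genuinely finite-rank with range spanned by the $G_\nu(\cdot\,;\la)$. To justify this, I would apply both sides to an arbitrary $h\in L^2$, set $u_F:=(\dF-\la)^{-1}h$ and $u_L:=(\dL-\la)^{-1}h$, note that $u_L-u_F\in\ker(\Delta^*-\la)$ hence equals $\sum_\nu c_\nu G_\nu$, and then determine $(c_\nu)$ from the self-adjoint boundary condition together with the fact that $u_F$ has vanishing $a^-$-coefficients (it lies in $\dom(\dF)=H^2$). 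The second step is to take the trace. Using an orthonormal-basis-free computation, the trace of a rank-one operator $\langle\,\cdot\,,G_\mu(\cdot\,;\overline\la)\rangle G_\nu(\cdot\,;\la)$ equals $\langle G_\nu(\cdot\,;\la),G_\mu(\cdot\,;\overline\la)\rangle$, which by Lemma \ref{lemders} is exactly $\dot S_{\mu\nu}$. Therefore
\[
\Tr\left((\dL-\la)^{-1}-(\dF-\la)^{-1}\right)\,=\,-\sum_{\mu,\nu}\left[(P+QS(\la))^{-1}Q\right]_{\nu\mu}\dot S_{\mu\nu}\,=\,-\Tr\left((P+QS(\la))^{-1}Q\dot S(\la)\right),
\]
which is the claimed identity.

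\textbf{The main obstacle.} The delicate point is the first step: rigorously deriving the Krein formula in this concrete setting and pinning down the coefficient matrix as $(P+QS(\la))^{-1}Q$. One must carefully track how the boundary condition $PA^-+QA^+=0$ determines $(c_\nu)$ in terms of the data of $u_F$. Concretely, writing $u_L=u_F+\sum_\nu c_\nu G_\nu$, the coefficient vectors satisfy $A^-(u_L)=A^-(u_F)+(c_\nu)$ and $A^+(u_L)=A^+(u_F)+S(\la)(c_\nu)$; imposing the boundary condition gives $(P+QS(\la))(c_\nu)=-\,(\text{data from }u_F)$, and the invertibility of $P+QS(\la)$ (equivalent to $\la\notin\spec(\dL)$) solves for $(c_\nu)$ linearly in $h$. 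The only genuinely technical matter is checking that this finite-rank operator is trace class and that the formal trace computation is legitimate — but since the range is finite-dimensional and spanned by $L^2$ functions, this is immediate. I would also need the regularity of $\dL$ (Definition \ref{def:reg}) only insofar as it guarantees that the $a^-_0$ logarithmic coefficients do not obstruct writing the block form \refeq{eq:Bkdec} and forming $P+QS(\la)$ as stated in remark \ref{rk:DregL}.
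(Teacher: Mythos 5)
Your proposal is correct and follows essentially the same path as the paper's proof: both pin down the Krein-formula coefficient matrix as $-(P+QS(\la))^{-1}Q$ by imposing the boundary condition $PA^-+QA^+=0$ on the difference of resolvents, and both then take the trace of the resulting finite-rank operator via the rank-one trace identity combined with Lemma \ref{lemders}. The only differences are presentational: the paper posits the Krein form (with a citation) and solves for the unknown matrix $X$ using the arbitrariness of the data $\Lambda$, while you derive it directly from the decomposition $u_L-u_F\in\ker(\Delta^*-\la)=\mathrm{span}\{G_\nu(\cdot\,;\la)\}$; note also that regularity of $\dL$ is nowhere needed for this proposition, so your closing caveat is superfluous.
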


\begin{proof}
Let $\lambda$ be in the union of the resolvent sets of $\dF$ and $\dL,$ and let $f$ be
in $L^2.$ We search a matrix $X=[x_{\mu\nu}]$ such that
we have the following Krein formula (see, e. g.,  \cite{BirSol} or \cite{Albeverio}, Theorem A.3)
\begin{equation}\label{Kreinpretrace}
(\dL-\la)^{-1}f\,=\,(\dF -\la)^{-1}(f) +
\sum_{\mu,\nu}x_{\mu\nu}G_\mu(\cdot\,;\la)\Lambda_\nu\left[(\dF -\la)^{-1}(f)\right ].
\end{equation}

We denote by $u=(\dF-\la)^{-1}(f)$ and we compute the vectors $A^\pm$ of the right-hand side
\begin{gather*}
a^-_\mu\,=\, \sum_{\nu}x_{\mu\nu} \La_\nu(u),\\
a^+_{\mu'} \,=\,\La_{\mu'}(u)+\sum_{\mu,\nu} x_{\mu\nu}[S(\la)]_{\mu'\mu}\Lambda_\nu(u).
\end{gather*}

Denoting by $\La$ the vector $\La_\nu(u)$ we thus have
\[
A^-\,=\,X\La,~~~A^+=(I + S(\lambda)X)\La.
\]
Plugging into the self-adjoint condition we obtain that the following relation is satisfied.

\[
\left[PX +Q(I+S(\la)X)\right]\cdot \La\,=\,0.
\]
Using the block decomposition \refeq{eq:Bkdec}, we see that
\[
P+QS(\la)\,=\,\left(
\begin{array}{cc}
P_2+Q_1S(\la) & *\\
0& P_1
\end{array}
\right)
\]
Since $\lambda$ is in both resolvent sets, $\La$ is arbitrary and the
preceding system always has a solution. We obtain that $(P_2+Q_1S({\la}))$ must be invertible
and hence $P+QS(\la).$
Finally, we obtain
\[
X\,=\,-(P+QS({\la}))^{-1}Q.
\]

Denoting by $\Pi_{\mu\nu}(\la)$ the (rank one) operator defined from $H^2$ into $L^2$ by
\[
\Pi_{\mu\nu}(\la)(u)\,=\, G_\mu(\cdot\,;\la)\Lambda_\nu(u),
\]
equation \refeq{Kreinpretrace} may be rewritten :
\[
(\dL-\la)^{-1}-(\dF-\la)^{-1}\,=\,\sum_{\mu,\nu} x_{\mu\nu}\Pi_{\mu\nu}(\la)\circ(\dF-\la)^{-1}
\]

Observe that the right-hand side is finite rank so that we can trace this equation and obtain
\[
\Tr\left((\dL-\la)^{-1}-(\dF-\la)^{-1})\right)\,=\,\sum_{\mu, \nu}x_{\mu\nu}
\Tr\left( \Pi_{\mu\nu}(\la)\circ(\dF-\la)^{-1} \right).
\]

Using lemma \ref{lemtracepi} below and lemma \ref{lemders} we obtain
\begin{eqnarray*}
\Tr\left((\dL-\la)^{-1}-(\dF-\la)^{-1})\right) &=&\sum_{\mu, \nu}x_{\mu\nu}
\langle G_\mu(\cdot\,;\la),G_\nu(\cdot\,;\overline{\la})\rangle \\
&=& \sum_{\mu \nu}x_{\mu\nu} [\dot{S}(\la)]_{\nu\mu}\\
&=& -\Tr\left( (P+QS(\la))^{-1}Q\dot{S}(\la) \right).
\end{eqnarray*}
\end{proof}

It remains to prove the following lemma.

\begin{lem}\label{lemtracepi}
The trace of the rank one operator $\Pi_{\mu\nu}(\la)\circ(\dF-\la)^{-1}$ is given by
\[
\Tr \left( \Pi_{\mu\nu}(\la)\circ(\dF-\la)^{-1}\right)\,=\,
\langle G_\mu(\cdot\,;\la),G_\nu(\cdot\,;\overline{\la})\rangle
\]
\end{lem}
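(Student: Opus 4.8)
The plan is to compute the trace of the rank one operator $\Pi_{\mu\nu}(\la)\circ(\dF-\la)^{-1}$ directly from its action, using the standard fact that a rank one operator of the form $w \mapsto \langle w, h\rangle v$ has trace $\langle v, h\rangle$. First I would unwind the definitions: by construction $\Pi_{\mu\nu}(\la)(u) = G_\mu(\cdot\,;\la)\Lambda_\nu(u)$, so the composite operator acts on $f\in L^2$ by
\[
\Pi_{\mu\nu}(\la)\circ(\dF-\la)^{-1}(f)\,=\,G_\mu(\cdot\,;\la)\,\Lambda_\nu\left[(\dF-\la)^{-1}f\right].
\]
The key is to rewrite the scalar factor $\Lambda_\nu\left[(\dF-\la)^{-1}f\right]$ as an honest $L^2$ inner product of $f$ against a fixed function, which will exhibit the operator in the canonical rank one form.

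The main step is to invoke equation \refeq{eq:LambdaG}, which states that for $u\in H^2$ one has $\Lambda_\nu(u) = \langle (\Delta_F-\la)u, G_\nu(\cdot\,;\overline{\la})\rangle$. Applying this with $u=(\dF-\la)^{-1}f$ gives
\[
\Lambda_\nu\left[(\dF-\la)^{-1}f\right]\,=\,\langle (\dF-\la)(\dF-\la)^{-1}f,\,G_\nu(\cdot\,;\overline{\la})\rangle\,=\,\langle f,\,G_\nu(\cdot\,;\overline{\la})\rangle.
\]
Thus the operator is precisely $f\mapsto \langle f, G_\nu(\cdot\,;\overline{\la})\rangle\,G_\mu(\cdot\,;\la)$, which is rank one with the inner-product vector $h=G_\nu(\cdot\,;\overline{\la})$ and the range vector $v=G_\mu(\cdot\,;\la)$. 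Its trace is therefore $\langle v,h\rangle = \langle G_\mu(\cdot\,;\la),\,G_\nu(\cdot\,;\overline{\la})\rangle$, which is exactly the claimed formula.

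I expect the only subtle point to be justifying the trace formula for the rank one operator, namely that the single nonzero eigenvalue equals the inner product $\langle G_\mu(\cdot\,;\la), G_\nu(\cdot\,;\overline{\la})\rangle$; this is elementary since for $T(f)=\langle f,h\rangle v$ one computes $\Tr(T)=\langle v,h\rangle$ by evaluating $T$ on an orthonormal basis (the only contribution comes from the component of $v$ along $h$), and we must check that $v=G_\mu(\cdot\,;\la)$ genuinely lies in $L^2$ so that the operator is trace class — but this was already established earlier, since $G_\nu(\cdot\,;\la)\in L^2$ for $\la$ in the resolvent set. The remaining care concerns the conjugation conventions on $\la$ versus $\overline{\la}$: one must track that the functional $\Lambda_\nu$ pairs against $G_\nu(\cdot\,;\overline{\la})$ rather than $G_\nu(\cdot\,;\la)$, which is precisely what \refeq{eq:LambdaG} dictates, and this is what makes the expression consistent with the symmetric form appearing in lemma \ref{lemders}.
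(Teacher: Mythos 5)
Your proof is correct and takes essentially the same route as the paper: both hinge on applying \refeq{eq:LambdaG} with $u=(\dF-\la)^{-1}f$ to rewrite $\Lambda_\nu\left[(\dF-\la)^{-1}f\right]$ as $\langle f, G_\nu(\cdot\,;\overline{\la})\rangle$, and then the trace is computed by summing over an orthonormal basis and using Parseval's identity. The only cosmetic difference is that you first exhibit the operator in canonical rank one form $f\mapsto \langle f,h\rangle\, v$ and then quote the trace formula for such operators, whereas the paper sums the diagonal matrix elements $\langle \Pi_{\mu\nu}(\la)\circ(\dF-\la)^{-1}e_n,e_n\rangle$ directly.
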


\begin{proof}
Let $e_n$ be an orthonormal basis of $L^2$ then
\begin{eqnarray*}
\langle \Pi_{\mu\nu}(\la)\circ(\dF-\la)^{-1} e_n,e_n\rangle &=&
\langle G_\mu(\cdot\,;\la),e_n\rangle \cdot \Lambda_\nu\left((\dF-\la)^{-1}e_n\right) \\
&=& \langle G_\mu(\cdot\,;\la),e_n\rangle\cdot
\langle e_n, G_\nu(\cdot\,;\overline{\la})\rangle
\end{eqnarray*}
Summing over $n$ and using Parseval's identity gives the lemma.
\end{proof}

We may now define $D$ on the union of the resolvent sets of $\dF$ and $\dL$ by
\begin{equation}
  \label{eq:defD}
  D(\la)\,=\,\det \left(P+QS(\la)\right).
\end{equation}

\begin{remk}\label{rk:DregL}
When the extension is regular the matrix $P+QS(\la)$ doesn't involve
the coefficients $S_{\mu\nu}$ whenever $\mu$ or $\nu$ is $0$ (because
these are multiplied by a zero entry of $Q$). Hence the matrix $P+QS(0)$ makes
perfect sense and can be computed using the results of section \ref{sec:exfor}.
\end{remk}

The preceding proposition gives
\begin{equation}
  \label{eq:difftraceD}
 \Tr\left( (\dL-\la)^{-1}-(\dF-\la)^{-1}\right )\,=\,-\frac{D'(\la)}{D(\la)}
\end{equation}
This implies that $\frac{D'}{D}$ extends to a meromorphic function with poles
that correspond to eigenvalues of $\dL$ and $\dF$ and with residues
$\dim(\ker(\dL-\la))-\dim(\ker(\dF-\la)).$

Since $\frac{D'}{D}$ is the logarithmic derivative of $D$, it is convenient to give a
name to $\ln(D).$
We thus denote by $\Omega\subset \C$ the set obtained by removing a downward vertical
cut starting at each eigenvalue of $\dF$ and $\dL$ i.e.
\[
\Omega \,=\,\C\setminus\left\{ \la-it,\,\la\in {\spec}(\dF)\cup{\spec}(\dL),\,t\in(-\infty,0]\,\right\},
\]
and, on $\Omega$, we define the function $\xit$ by
$\xit(\la):=\,-\frac{1}{2i\pi} \ln\left(\det(P+QS(\la))\right).$

Observe that on $\Omega$ we have, by definition,
\begin{equation}\label{eq:BKdef}
D(\la)\,=\,\exp\left( -2i\pi \xit(\la)\right).
\end{equation}

The function $\xit$ is intimately related to the spectral shift function
$\xi$ (see \cite{Yafaev, GesSim}). Although the latter is usually used in settings
with continuous spectrum, it is possible to define it even when $H_0$ and $H_1$ have
pure point spectrum. In the latter case, it follows from the definitions that $\xi$ is the step-function :
$\xi(t):= N_1(t)-N_0(t)$ where $N_i$ is the counting function associated with $H_i.$

It follows from our definition of $\xit$ that the function $\xi$ defined on $\R$ by
\[
\xi(t)\,:=\, -\frac{1}{2\pi}\mbox{Arg}(D(t))\,=\,~\mbox{Re}\, \xit(t)
\]

is a step function with jumps located at the eigenvalues of $\dF$ and $\dL.$
Moreover the jumps are exactly the differences $\dim(\ker(\dL-\la))-\dim(\ker(\dF-\la)).$
We thus obtain the spectral shift function of $\dF$ and $\dL.$ (compare with \cite{Yafaev}
Thm 1 p. 272).

In our setting Birman-Krein formula would be
\refeq{eq:BKdef} (compare with \cite{Yafaev} p.272) and
would follow, in our case, from our definitions.
In the next subsection we will
prove that, using $D$, one may define a determinant of $\dL$ via
zeta-regularization and then establish the relation :

\begin{equation}
  \label{eq:BK}
  {\det}_\zeta(\dL-\la)\,=\,C_0\cdot D(\la) {\det}_\zeta(\dF-\la),
\end{equation}

in which $C_0$ is some constant that
we will also determine.

In particular, we will now prove that $D$
allows us to recover the 'exotic' features of the zeta function associated with
$\dL.$ This unusual behavior has been extensively studied
in \cite{KLP} in a setting very close to ours and in \cite{GKM} in greater
generality. Our main contribution here is the interpretation of $D$ using
$S$-matrix that, in some sense, gives a geometrical interpretation to
the 'secular equation' method of \cite{KLP}.

\subsection{Comparing determinants}

The procedure here is not as straightforward as usual because of unusual behavior
of the zeta function near $s=0.$ In particular, $\zeta(s,\dL)$ will admit a analytic continuation
that is regular at $0$ only if $L$ is regular (though with possible unusual poles).
This unusual behavior as we just mentioned has been extensively studied in literature (see \cite{GKM, KLP, LMP});
from our point of view, it is linked with the asymptotic behavior of $D(\la)$
for large negative $\la.$ We thus begin by deriving this asymptotic expansion.

\subsubsection{$D(\la)$ for large negative $\la$.}\label{sec:mularge}

The analysis that follows is closely related to the one performed in \cite{KLP}. This is not surprising
since the asymptotic regime $\la$ goes to $-\infty$ is local. In particular, the
function $D(-|\la|):=\det(P+QS(-|\la|))$ on a cone has to be compared to the
function $F(i\sqrt{|\la|})$ in \cite{KLP}.

We first use prop. \ref{prop:Sasymp} and consider all possible sums of
the exponents $\nu_i$ that appear in this proposition. This gives us a collection
of numbers that we order and denote by $\alp_0\,>\,\alp_1\,>\,\cdots\,>\,\alp_k\,>\,\cdots.$
Expanding now the determinant, and ordering the terms, we get
\[
D(-|\la|)\,=\, \sum_{finite} a_{kl} |\la|^{\alp_k}(\ln |\la|)^l + O(|\la|^{-\infty}).
\]

By definition, there are no logarithm in the expansion corresponding to a regular
self-adjoint extension, therefore, in that case, the expansion reads :
\[
D(-|\la|)\,=\, \sum_{finite} a_{k} |\la|^{\alp_k}\,+\, O(|\la|^{-\infty}).
\]

We set $l_k$ the largest integer $l$ such that $|\la|^{\alp_k}(\ln |\la|)^l$ appears
in the expansion and we set $\bet_k={\alp}_0-\alp_k$ we have

\[
D(-|\la|)\,=\,a_{0l_0} |\la|^{\alp_0}(\ln |\la|)^{l_0}
\left[ 1 + \sum_{l\geq 1} a_{0l}(\ln |\la|)^{-l} +
\sum_{\bet_k>0}\sum_{-l_k}^{l_0} a_{kl}|\la|^{-\bet_k}(\ln |\la|)^l + O(|\la|^{-\infty})\right ]
\]

We denote by $F(\la)\,=\,\left[ 1 + \sum_{l\geq 1} a_{0l}(\ln |\la|)^{-l} +
\sum_{\bet_k>0}\sum_{-l_k}^{l_k} a_{kl}|\la|^{-\bet_k}(\ln |\la|)^l + O(|\la|^{-\infty})\right ]$

Taking the logarithmic derivative, we obtain

\[
-\frac{D'(-|\la|)}{D(-|\la|)}\,=\,2i\pi\xit'(-|\la|)\,=\,
\frac{\alp_0}{|\la|}\,+\,\frac{l_0}{|\la|\ln|\la|}+ \frac{F'(\la)}{F(\la)}.
\]

By inspection we find
\[
\frac{F'(\la)}{F(\la)}\,=\, \left\{
\begin{array}{lr}
O\left(|\la|^{-\bet_1-1}\right)& \mbox{regular case}\\
O\left(|\la|^{-1}(\ln|\la|)^{-2}\right) & \mbox{otherwise}.
\end{array}
\right .
\]

\begin{lem}\label{lem:xiasymp}
\begin{enumerate}
\item In the regular case, there exist three positive numbers $\alp_0,\,\beta_1$ and $M$ such that
the estimate
\begin{equation}
  \label{eq:xiasympnat}
\left | 2i\pi\xit'(-|\la|)-\frac{\alp_0}{|\la|}\right|\,
\leq\, M |\la|^{-\bet_1-1},
\end{equation}
holds for $\la$ large enough.
\item In the other cases, there exist two positive real numbers $\alp_0$ and $\bet_1,$ a non-negative integer
$l_0$ and a constant $M$ such that the estimate

\begin{equation}
  \label{eq:xiasymp}
\left |2i\pi \xit'(-|\la|)-\frac{\alp_0}{|\la|}-\frac{l_0}{|\la|\ln|\la|}\right|\,
\leq\, M \cdot|\la|^{-1}(\ln |\la|)^{-2}
\end{equation}
holds for $|\la|$ large enough.
\end{enumerate}
\end{lem}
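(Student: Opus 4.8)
The plan is to obtain the two estimates directly from the asymptotic expansion of $D(-|\la|)$ that has already been assembled from Proposition \ref{prop:Sasymp}. Recall that $2i\pi\xit'(-|\la|)=-D'(-|\la|)/D(-|\la|)$ by the definition of $\xit$ via \refeq{eq:BKdef}, so the whole computation reduces to controlling the logarithmic derivative of $D$. The structural facts I would lean on are already in hand: expanding the determinant $\det(P+QS(-|\la|))$ and inserting the three asymptotic regimes from Proposition \ref{prop:Sasymp} (the off-diagonal $O(|\la|^{-\infty})$ entries, the diagonal power-law entries $|\la|^{|\nu|}$ for $\nu\neq 0$, and the logarithmic $\frac12\ln|\la|$ entry at $\nu=0$) produces a finite sum
\[
D(-|\la|)=\sum_{finite} a_{kl}|\la|^{\alp_k}(\ln|\la|)^l+O(|\la|^{-\infty}),
\]
with the exponents $\alp_0>\alp_1>\cdots$ arising as sums of the $|\nu|$ and the logarithmic powers arising only from the $S_{00}$ factors. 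Crucially, in the regular case the matrix $P+QS$ never sees the $\nu=0$ entries (Remark \ref{rk:DregL}), so no $\ln|\la|$ powers occur and $l_0=0$.

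The key step is to factor out the dominant term and reduce everything to the remainder $F(\la)$. Writing
\[
D(-|\la|)=a_{0l_0}|\la|^{\alp_0}(\ln|\la|)^{l_0}\,F(\la),
\]
the logarithmic derivative splits cleanly: the prefactor $|\la|^{\alp_0}$ contributes the exact term $\alp_0/|\la|$, the prefactor $(\ln|\la|)^{l_0}$ contributes $l_0/(|\la|\ln|\la|)$, and everything else is lumped into $F'(\la)/F(\la)$. Thus the content of the lemma is entirely the estimate on $F'/F$. I would first verify that $F(\la)\to 1$ as $|\la|\to\infty$ (so that $\ln F$ is well defined and $F$ is bounded away from $0$ for large $|\la|$), and then differentiate termwise. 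In the regular case $F-1$ is a finite sum of terms $|\la|^{-\bet_k}$ with $\bet_k>0$ (no logarithms), so its leading behavior is $O(|\la|^{-\bet_1})$ and differentiating gives $F'/F=O(|\la|^{-\bet_1-1})$, yielding \refeq{eq:xiasympnat}. In the non-regular case $F-1$ additionally contains terms $(\ln|\la|)^{-l}$ with $l\geq 1$; the slowest-decaying of these is $(\ln|\la|)^{-1}$, whose derivative is of size $|\la|^{-1}(\ln|\la|)^{-2}$, which dominates both the power-law remainders $|\la|^{-\bet_1-1}(\ln|\la|)^l$ and is exactly the claimed bound \refeq{eq:xiasymp}.

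The remaining bookkeeping is to confirm that the two terms I have extracted, $\alp_0/|\la|$ and (in the non-regular case) $l_0/(|\la|\ln|\la|)$, are genuinely the two slowest-decaying contributions, so that what is left really is of the stated smaller order; this is where the inspection recorded just before the lemma statement does the work, and I would simply make it quantitative by tracking the largest surviving power of $\ln|\la|$ at each exponent level. The estimates are then uniform for $|\la|$ large enough, with the constant $M$ absorbing the finitely many coefficients $a_{kl}$ and the lower bound on $|F|$. The only genuinely delicate point — and the main obstacle — is justifying that these manipulations are legitimate, i.e.\ that the $O(|\la|^{-\infty})$ remainders from Proposition \ref{prop:Sasymp} survive the determinant expansion, the factorization, and the differentiation without degrading; here I would invoke that Proposition \ref{prop:Sasymp} and the underlying lemma hold together with their $\la$-derivatives (as that lemma explicitly asserts), so that differentiating the expansion of $D$ is permissible and the error terms remain $O(|\la|^{-\infty})$ throughout.
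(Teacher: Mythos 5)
Your proposal follows essentially the same route as the paper: reduce to the logarithmic derivative of $D$ via \refeq{eq:BKdef}, expand $\det(P+QS(-|\la|))$ using Proposition \ref{prop:Sasymp}, factor out the dominant term $a_{0l_0}|\la|^{\alp_0}(\ln|\la|)^{l_0}$ so that the remainder $F$ tends to $1$, and bound $F'/F$ by inspection in the regular and non-regular cases. Your explicit appeal to the differentiability of the asymptotics in Proposition \ref{prop:Sasymp} (via the underlying heat-kernel lemma) makes precise a step the paper leaves implicit, but the argument is the same.
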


In the regular case, for any $C>0,$ define $h_C(s)$ for $\Re(s)$ large enough by
\begin{equation}\label{eq:defhC}
h_C(s)\,=\,2i\pi\int_C^\infty \la^{-s}\xit'(-|\la|)\,d\la\,-\,\frac{\alp_0}{s}\exp(-s\ln(C)).
\end{equation}

The estimates of the previous lemma imply the following corollary. We restrict to the regular
case although similar statements are valid in the non-regular case (with extra
logarithmic singularities -see \cite{KLP}).
\begin{prop}
For a regular extension, the function $h_C$ extends to a holomorphic
function on $\left \{ \Re(s) \geq -\beta_1 \right \}.$
\end{prop}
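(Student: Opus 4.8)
The plan is to show that the function $h_C$ defined in \refeq{eq:defhC} extends holomorphically to the half-plane $\{\Re(s)\geq-\beta_1\}$ by using the asymptotic estimate \refeq{eq:xiasympnat} from Lemma \ref{lem:xiasymp} to control the integrand. The whole point of subtracting the term $\frac{\alp_0}{s}\exp(-s\ln(C))$ in the definition of $h_C$ is precisely to remove the leading $\frac{\alp_0}{|\la|}$ behavior of $2i\pi\xit'(-|\la|)$, since that leading term is what forces the integral $\int_C^\infty \la^{-s}\frac{\alp_0}{|\la|}\,d\la = \frac{\alp_0}{s}\exp(-s\ln C)$ to converge only for $\Re(s)>0$. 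Once this term is subtracted, what remains is governed by the error bound, and I expect the convergence region to widen to $\Re(s)>-\beta_1$.

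First I would rewrite $h_C(s)$ as a single integral. Since $\int_C^\infty \la^{-s}\frac{\alp_0}{\la}\,d\la$ converges to $\frac{\alp_0}{s}\exp(-s\ln C)$ for $\Re(s)>0$, on that half-plane we may write
\[
h_C(s)\,=\,\int_C^\infty \la^{-s}\left(2i\pi\xit'(-|\la|)-\frac{\alp_0}{\la}\right)\,d\la.
\]
Now I would invoke the regular-case estimate \refeq{eq:xiasympnat}, which says that the parenthesized integrand is bounded in modulus by $M|\la|^{-\bet_1-1}$ for $\la$ large. Therefore the integrand of the rewritten $h_C$ is bounded by $M|\la|^{-\Re(s)-\bet_1-1}$, and the integral $\int_C^\infty |\la|^{-\Re(s)-\bet_1-1}\,d\la$ converges precisely when $-\Re(s)-\bet_1-1<-1$, that is, when $\Re(s)>-\bet_1$. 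By enlarging $C$ if necessary so that the estimate holds on all of $[C,\infty)$, this shows the integral representation converges absolutely and locally uniformly on $\{\Re(s)>-\bet_1\}$.

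Holomorphy then follows from a standard argument: the integrand $\la\mapsto \la^{-s}\left(2i\pi\xit'(-|\la|)-\frac{\alp_0}{\la}\right)$ is holomorphic in $s$ for each fixed $\la$, and the uniform convergence on compact subsets of the half-plane (via the dominating bound above) lets me apply Morera's theorem, or equivalently differentiation under the integral sign, to conclude that the integral defines a holomorphic function there. Since this integral agrees with $h_C$ on $\{\Re(s)>0\}$ where both are defined, it furnishes the desired holomorphic extension to $\{\Re(s)>-\bet_1\}$, hence in particular to $\{\Re(s)\geq-\bet_1\}$ after a harmless shrinking of $\bet_1$ or noting that the boundary line carries no new singularity.

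The main obstacle, though largely bookkeeping, is being careful that the subtracted closed-form term $\frac{\alp_0}{s}\exp(-s\ln C)$ is itself meromorphic in $s$ on the whole plane with only a simple pole at $s=0$, so it must be checked that this pole does not reappear in $h_C$; the cancellation of the $s=0$ singularity between the integral and the subtracted term is exactly what makes $h_C$ regular at the origin, and this is the delicate point that the regularization is designed to exploit. The genuinely substantive input is the error estimate \refeq{eq:xiasympnat}, which has already been established, so beyond invoking it the remaining work is the routine convergence-and-Morera argument sketched above.
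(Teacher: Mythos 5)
Your proof is correct and follows essentially the same route as the paper: both split off the exact integral $\int_C^\infty \la^{-s}\frac{\alp_0}{\la}\,d\la=\frac{\alp_0}{s}\exp(-s\ln C)$, identify $h_C$ with the remainder integral $\int_C^\infty \la^{-s}\bigl[2i\pi\xit'(-|\la|)-\frac{\alp_0}{\la}\bigr]\,d\la$, and invoke the regular-case estimate \refeq{eq:xiasympnat} of Lemma \ref{lem:xiasymp} to get convergence, hence holomorphy, for $\Re(s)>-\bet_1$. Your added detail on locally uniform convergence and Morera's theorem simply makes explicit what the paper leaves implicit.
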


\begin{proof}
We have
\begin{equation*}
\begin{split}
\int_C^\infty \la^{-s}2i\pi\xit'(-|\la|)d\la\,=\,{} & \int_C^\infty \la^{-s}\left[ 2i\pi\xit'(-|\la|)-\frac{\alp_0}{\la}\right] d\la\\
& +\,\int_C^{\infty} \la^{-s} \frac{\alp_0}{\la}\,d\la.
\end{split}
\end{equation*}
The second integral on the right-hand side is computed directly :
\[
\int_C^{\infty} \la^{-s} \frac{\alp_0}{\la}\,d\la\,=\,\frac{\alp_0}{s}\exp(-s\ln C),
\]
so that $h_C$ actually represents the first integral.
Lemma \ref{lem:xiasymp} then gives that $h_C$ extends to a holomorphic function on $\Re s >-\beta_1.$
\end{proof}
\subsection{Zeta-regularization}\label{sec:zetareg}

For any $A$ and any $C$ that is large enough, for any $\lat\in \Omega$
such that $\mbox{Re}(\lat)> A$ we choose a cut $c_{\lat}\subset \Omega$ that starts from
$-\infty+i0$ and that ends at $\lat.$ We may choose it in such a way that it
begins with the interval $(-\infty,-C].$

For any $\lat$ and any $s\in\C$, the function $\la \mapsto (\la-\lat)^{-s}$, which is well defined when
$\la-\lat$ is a positive real number, extends to a holomorphic function
on the complement of the cut $c_{\lat}$. Moreover, when $\la$ goes to the cut $c_{\lat}$ from
above or from below, we have the following jump condition
\[
\lim_{\la\downarrow c_{\lat}}\exp(-i\pi s)(\la-\lat)^{-s}\,=\,
\lim_{\la\uparrow c_{\lat}}\exp(i\pi s)(\la-\lat)^{-s}.
\]
For $\la$ on $c_{\lat}$, we define $(\la-\lat)_0^{-s}$ to be this common limit.

Let  $A^+$ be any number greater than $A$ that is neither an eigenvalue of $\dF$ nor of $\dL.$
Define a contour $\gamma$ that avoids $c_{\lat}$ and that consists in one part that encloses
the half-line $\{ x\geq A^+\}$ and then small circles that enclose the eigenvalues of $\dL$
and $\dF$ that are smaller than $A^+.$

For $\Re(s)>1$ we have
\begin{eqnarray*}
\zeta(s,\dL-\lat)&=& \frac{1}{2i\pi}\Tr \left (
\int_\gamma (\la-\lat)^{-s}\left( \dL-\la\right)^{-1}d\la\right),\\
&=&\frac{1}{2i\pi}\Tr\left (\int_{c_{\lat,\eps}} (\la-\lat)^{-s}\left( \dL-\la\right)^{-1}d\la\right),
\end{eqnarray*}
in which $c_{\lat,\eps}$ denotes the contour obtained by following $c_{\lat}$ at a distance $\eps.$
The second identity comes from Cauchy integral formula since, when
$\Re(s)>1$ the contribution of a large circle centered at $\lat$
tends to zero when the radius grows to infinity.

The same formulas are true for $\dF$ and using the fact that
$(\dL-\la)^{-s}$ and $(\dF-\la)^{-s}$ are trace class for $\mbox{Re} s>1$,
we can exchange the contour integration and the trace operation to obtain

\[
\zeta(s, \dL-\lat)-\zeta(s,\dF-\lat)\,=\,\frac{1}{2i\pi}\int_{c_{\lat,\eps}} (\la-\lat)^{-s}\Tr\left(
(\dL-\la)^{-1}-(\dF-\la)^{-1}\right)d\la
\]

Using prop. \ref{prop:difftrace} and the definition of $\xit$ we obtain
\[
\zeta(s, \dL-\lat)-\zeta(s,\dF-\lat)\,=\,\int_{c_{\la,\eps}} (\la-\lat)^{-s}
\xit'(\la)d\la.
\]

We rewrite the right-hand side in the following form :
$\zeta_1(s)+\zeta_2(s)$ where $\zeta_1$ corresponds to the part of the contour $c_{\la,\eps}$ that is in the
half-plane $\left\{ \mbox{Re} \la \leq -C\right \},$ and $\zeta_2$ is the remaining part of that contour.

The function $\zeta_2$ extends to an entire function of $s$ and for $\mbox{Re}(s)<1$ we may let $\eps$ go to
$0,$ giving
\[
\forall s,\,\Re(s)<1,~~~~~~\zeta_2(s)\,=\,2i\sin(\pi s)
\int_{-C}^\lat (\la-\lat)_0^{-s}\xit'(\la)d\la,
\]
where the integral is along the part of the cut $c_{\lat}$ that belongs to the half-plane
$\left \{ \mbox{Re}(\la) > -C \right \}.$

For $\zeta_1$ we may first let $\eps$ go to $0$ and  obtain :
\[
\zeta_1(s)\,=\, 2i\sin(\pi s)\int_{-\infty}^{-C} (\la-\lat)_0^{-s}\xit'(\la)d\la.
\]

We make a further reduction by using the following technical lemma.
\begin{lem}\label{lem:mulaasympt}
On $\C \times \{ |z|<1\},$ we define $\rho(s,z)\,=\,\left(1-z\right)^{-s}-1.$
For any $r<1,$ and any $R>0,$ the following holds for any $|z|\leq r,$ and any $|s|\leq R$

\begin{equation}
  \label{eq:mulaasympt}
  \left| \rho(s,z) \right |\,\leq\, \frac{\exp\left(\frac{Rr}{1-r}\right)}{1-r} \cdot |s|\cdot |z|.
\end{equation}
\end{lem}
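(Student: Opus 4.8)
The plan is to write $\rho$ in exponential form and reduce the whole estimate to two elementary power-series bounds. First I would set $w:=-s\ln(1-z)$, taking the principal branch of the logarithm, which is well defined because $1-z$ lies in the open right half-plane whenever $|z|<1$. With this notation $\rho(s,z)=(1-z)^{-s}-1=e^{w}-1$, so the problem splits into controlling $|e^{w}-1|$ in terms of $|w|$ and controlling $|w|$ itself.

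The first ingredient is a bound on the logarithm. Expanding $\ln(1-z)=-\sum_{n\geq 1}z^n/n$ for $|z|<1$ and factoring out a single power of $z$, I would estimate, for $|z|\leq r$,
\[
|\ln(1-z)|\,\leq\,\sum_{n\geq 1}\frac{|z|^n}{n}\,\leq\,|z|\sum_{n\geq 1}|z|^{n-1}\,=\,\frac{|z|}{1-|z|}\,\leq\,\frac{|z|}{1-r}.
\]
Hence $|w|\leq |s|\,|z|/(1-r)$, and inserting $|s|\leq R$ and $|z|\leq r$ shows in addition that $|w|\leq Rr/(1-r)$. The second ingredient is the elementary inequality $|e^{w}-1|\leq |w|\,e^{|w|}$, which follows from $e^{w}-1=\sum_{n\geq 1}w^n/n!$ together with $e^{|w|}-1=\int_0^{|w|}e^t\,dt\leq |w|e^{|w|}$.

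Combining the two, and substituting the uniform bound $|w|\leq Rr/(1-r)$ only in the exponential factor while keeping the sharp linear bound on $|w|$ in the first factor, yields
\[
|\rho(s,z)|\,=\,|e^{w}-1|\,\leq\,|w|\,e^{|w|}\,\leq\,\frac{|z|}{1-r}\,|s|\cdot\exp\!\left(\frac{Rr}{1-r}\right),
\]
which is exactly the claimed inequality. I expect no genuine obstacle here; the only point to watch is that both power series are handled uniformly on $|z|\leq r<1$ and $|s|\leq R$, so that the constant $\exp(Rr/(1-r))/(1-r)$ is finite and independent of the particular pair $(s,z)$. The key structural feature is the factorization that pulls one power of $z$ (and correspondingly one power of $s$ through $w$) out front, which is precisely what produces the linear dependence on $|s|\cdot|z|$ in the statement.
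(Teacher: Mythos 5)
Your proof is correct and follows essentially the same route as the paper: both expand $\rho(s,z)=e^{-s\ln(1-z)}-1$ as an exponential series, bound $|\ln(1-z)|\leq |z|/(1-r)$, and then control $e^{a}-1$ by $\int_0^{a}e^{t}\,dt\leq a\,e^{a}$, keeping the linear factor sharp and pushing the uniform bound $Rr/(1-r)$ into the exponential. The only cosmetic difference is that you prove the logarithm estimate by power series while the paper invokes integration; the argument is otherwise identical.
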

\begin{proof}
We start from
\[
\rho(s,z) \,=\, \sum_{k\geq 1} \frac{(-s)^k \left[\ln(1-z)\right]^k}{k!}.
\]
By integration we have $|\ln(1-z)|\leq \frac{1}{1-r}|z|$ so that
\[
|\rho(s,z)| \leq \exp\left( \frac{|s||z|}{1-r}\right)-1\,=\,\int_0^{\frac{|s||z|}{1-r}}\exp(v)\,dv.
\]
The claim then follows.
\end{proof}

For $\mbox{Re}(\la)\leq -C,$ there exists some $r<1$ such that $\left|\frac{\lat}{\la}\right|\leq r$.
We can thus write
\[
(\la-\lat)^{-s}\,=\, \la^{-s}\left(1+\rho\left(s,\frac{\lat}{\la}\right)\right)
\]
for any $\la$ such that $\mbox{Re}(\la)\leq -C$ and $\la \notin (-\infty,-C).$

Fix some $R$, For $s$ such that $\mbox{Re}(s)>0$ and $|s|\leq R,$ using the bound
in Lemma \ref{lem:mulaasympt} we may let $\eps$ go to zero and write

\[
\zeta_1(s)\,=\, 2i\sin(\pi s) \int_{-\infty}^{-C} |\la|^{-s}\xit'(\la)\,d\la
+\, 2i\sin(\pi s)\tilde{R}_C(s,\lat)\,
\]
where
\[
\tilde{R}_C(s,\lat)\,=\, \int_{-\infty}^{-C} |\la|^{-s}\xit'(\la)\rho\left(s,\frac{\lat}{\la}\right) d\la.
\]

Using Lemma \ref{lem:mulaasympt} and  Lemma \ref{lem:xiasymp} we find that, for any
extension (regular or not) ${\tilde{R}}_C( \cdot , \la)$
can be analytically continued to $\Re(s)> -1$, and that ${\tilde{R}}_C(0)=0.$

Adding up $\zeta_1$ and $\zeta_2$ we obtain the following proposition.

\begin{prop}
For any extension, the function $R_C(s,\lat)$ which is defined for $s$ large by
\begin{equation*}
R_C(s, \lat )= \zeta(s,\dL-\lat)-\zeta(s,\dF-\lat)- 2i \sin(\pi s)\int_{-\infty}^{-C} |\la|^{-s}\xit'(\la)\,d\la -\zeta_2(s)
\end{equation*}
can be analytically continued to $\Re(s)\,>\,-1$ and $R_C(s,\lat)$ vanishes at least at second order
at $s=0.$
\end{prop}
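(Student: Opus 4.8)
The plan is to notice that, after the cancellations that are wired into its definition, $R_C(s,\lat)$ is simply $2i\sin(\pi s)$ times the remainder term $\tilde R_C(s,\lat)$ that has already been analyzed, so that both assertions reduce to known properties of $\tilde R_C$ and the elementary behavior of $\sin(\pi s)$. Concretely, I would start from the two facts established in the paragraphs immediately preceding the statement: the splitting
\[
\zeta(s,\dL-\lat)-\zeta(s,\dF-\lat)\,=\,\zeta_1(s)+\zeta_2(s),
\]
together with the identity
\[
\zeta_1(s)\,=\,2i\sin(\pi s)\int_{-\infty}^{-C}|\la|^{-s}\xit'(\la)\,d\la\,+\,2i\sin(\pi s)\,\tilde R_C(s,\lat).
\]
Substituting these into the defining expression for $R_C$, the two copies of $\zeta_2(s)$ cancel and the explicit integral cancels against the first piece of $\zeta_1$, leaving
\[
R_C(s,\lat)\,=\,2i\sin(\pi s)\,\tilde R_C(s,\lat),
\]
valid for $\Re(s)$ large, where every term is defined.

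Granting this identity, both claims follow quickly. For the analytic continuation, I would invoke the result recalled just above the statement---proved using Lemma \ref{lem:mulaasympt} and Lemma \ref{lem:xiasymp}---that $\tilde R_C(\cdot,\lat)$ extends holomorphically to $\{\Re(s)>-1\}$. Since $\sin(\pi s)$ is entire, the product is holomorphic on $\{\Re(s)>-1\}$, which is exactly the asserted continuation. For the order of vanishing at the origin, I would combine two observations: $\sin(\pi s)=\pi s+O(s^3)$ has a simple zero at $s=0$, and $\tilde R_C(0,\lat)=0$ is part of the cited result, so that $\tilde R_C$, being holomorphic near $0$, vanishes there to order at least one. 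The product $2i\sin(\pi s)\,\tilde R_C(s,\lat)$ therefore vanishes to order at least two at $s=0$, as required.

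In this formulation there is essentially no remaining obstacle: the genuine analytic labor---the uniform-in-$s$ control of the $\rho$-remainder supplied by Lemma \ref{lem:mulaasympt} and the extraction of the leading singular behavior of $\xit'$ near $-\infty$ from Lemma \ref{lem:xiasymp}---has already been carried out. The only points demanding care are bookkeeping ones: tracking the three cancellations correctly, and checking that the holomorphic continuation of $\tilde R_C$ genuinely reaches up to and beyond $s=0$, so that holomorphy at the origin (rather than a mere limiting value) may be used to pass from $\tilde R_C(0,\lat)=0$ to a zero of order one. Once these are verified, the second-order vanishing of $R_C$ is immediate.
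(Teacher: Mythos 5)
Your proposal is correct and follows essentially the same route as the paper: the paper's proof likewise observes by inspection that the cancellations leave $R_C(s,\lat)=2i\sin(\pi s)\tilde{R}_C(s,\lat)$, and then uses the bound $\left||\la|^{-s}\xit'(\la)\rho\left(s,\frac{\lat}{\la}\right)\right|\leq \tilde{C}|s|\cdot|\la|^{-\Re(s)-2}$ from Lemmas \ref{lem:xiasymp} and \ref{lem:mulaasympt} (the same facts underlying the continuation of $\tilde{R}_C$ to $\Re(s)>-1$ and $\tilde{R}_C(0)=0$ that you invoke) to get holomorphy on $\Re(s)>-1$ and the extra order of vanishing at $s=0$ beyond the simple zero of $\sin(\pi s)$.
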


\begin{proof}
By inspection and using the definitions of the different functions that appear in the
expression of $R_C$ we find that
\[
R_C(s,\lat) \,=\, 2i\sin(\pi s)\tilde{R}_C(s,\lat).\]
Using the bounds given by Lemmas \ref{lem:xiasymp} and \ref{lem:mulaasympt} we find a constant $\tilde{C}$ such that
\[
\forall \lambda<-C,~ ~ \left | |\la|^{-s}\xit'(\la)\rho\left(s,\frac{\lat}{\la}\right) \right| \,\leq\, \tilde{C} |s|\cdot|\lambda|^{-\mathrm{Re}(s)-2},
\]
where $\tilde{C}$ depends on $C, \tilde{\lambda}$ and is uniform for $|s|\leq R.$ The claim follows
\end{proof}

In particular, in the regular case, we obtain the following corollary
(compare with \cite{Mooers})

\begin{coro}
If $L$ defines a regular extension then $(s-1)\zeta(s,\dL-\lat)$ extends to a
holomorphic function on $\Re(s)> -\beta_1.$
\end{coro}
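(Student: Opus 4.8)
The plan is to read off the analytic structure of $\zeta(s,\dL-\lat)$ directly from the decomposition just obtained, isolating the Friedrichs term whose behaviour is classical. Rearranging the definition of $R_C$ from the last proposition, I would write, for $\Re(s)>1$,
\[
\zeta(s,\dL-\lat)\,=\,\zeta(s,\dF-\lat)\,+\,2i\sin(\pi s)\int_{-\infty}^{-C}|\la|^{-s}\xit'(\la)\,d\la\,+\,\zeta_2(s)\,+\,R_C(s,\lat),
\]
and then show that every summand on the right except $\zeta(s,\dF-\lat)$ continues holomorphically to $\{\Re(s)>-\beta_1\}$. Two of them are immediate: $\zeta_2$ is entire, and the proposition introducing $R_C$ gives that $R_C(\cdot,\lat)$ continues holomorphically to $\{\Re(s)>-1\}\supset\{\Re(s)>-\beta_1\}$, vanishing to second order at $s=0$.

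The only term needing work is the integral. After the substitution $\la=-\mu$ I would recognise it, up to the prefactor $\tfrac{\sin(\pi s)}{\pi}$, as $h_C(s)+\tfrac{\alp_0}{s}\exp(-s\ln C)$, with $h_C$ the function of \refeq{eq:defhC}. Since $h_C$ is holomorphic on $\{\Re(s)\ge-\beta_1\}$, the product $\tfrac{\sin(\pi s)}{\pi}h_C(s)$ is holomorphic there; and the apparently singular piece $\tfrac{\sin(\pi s)}{\pi}\cdot\tfrac{\alp_0}{s}\exp(-s\ln C)=\alp_0\exp(-s\ln C)\tfrac{\sin(\pi s)}{\pi s}$ is actually entire, the pole of $1/s$ being absorbed by the zero of $\sin(\pi s)$. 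Hence the integral term is holomorphic on $\{\Re(s)>-\beta_1\}$, in particular at $s=0$ and at $s=1$. Assembling the three terms shows that $\zeta(s,\dL-\lat)-\zeta(s,\dF-\lat)$ continues holomorphically to $\{\Re(s)>-\beta_1\}$; this already records the regularity at $s=0$ that characterises the regular case.

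It then remains to import the structure of the reference zeta function. I would invoke the classical short-time heat-trace expansion for the Friedrichs Laplacian on a surface with conical singularities (cf. \cite{Cheeger}, \cite{Mooers}): it yields a meromorphic continuation of $\zeta(s,\dF-\lat)$ whose sole singularity in $\{\Re(s)>-\beta_1\}$ is the simple pole at $s=1$ furnished by the leading area coefficient, the Friedrichs extension being moreover regular at $s=0$. Consequently $(s-1)\zeta(s,\dF-\lat)$ is holomorphic on $\{\Re(s)>-\beta_1\}$, and adding the holomorphic difference established above gives the claim for $(s-1)\zeta(s,\dL-\lat)$. I expect the genuine content to lie not in any one calculation but in this last input, which is external to the excerpt: the whole corollary rests on the fact that $\dF$ contributes only the simple pole at $s=1$ in this strip. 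The one point of bookkeeping to double-check is regularity at $s=1$ of the remaining terms --- immediate from the $\sin(\pi s)$ prefactor and from $R_C$ being holomorphic on $\{\Re(s)>-1\}$ --- so that the pole of $\zeta(s,\dL-\lat)$ at $s=1$ is exactly that of $\zeta(s,\dF-\lat)$ and is killed by the factor $(s-1)$.
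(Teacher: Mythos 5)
Your proposal is correct and follows essentially the same route as the paper: you rearrange the preceding proposition into the decomposition $\zeta(s,\dL-\lat)=\zeta(s,\dF-\lat)+\tfrac{\sin(\pi s)}{\pi}\bigl(h_C(s)+\tfrac{\alp_0}{s}e^{-s\ln C}\bigr)+\zeta_2(s)+R_C(s,\lat)$, observe that $\sin(\pi s)/s$ is entire, use the holomorphy of $h_C$ on $\{\Re(s)\ge-\beta_1\}$ together with that of $\zeta_2$ and $R_C$, and import the classical fact that $(s-1)\zeta(s,\dF-\lat)$ is holomorphic (the paper cites \cite{AurSal, BDK, KLP, KokKor} for its extension to all of $\C$). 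Your closing remark correctly identifies the external heat-kernel input on $\dF$ as the crux, which is exactly how the paper's proof is organized.
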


\begin{proof}
The zeta regularization of the Friedrichs extension is well-known and well
studied starting from the small-time asymptotics of the heat kernel (obtained
for instance from \cite{Cheeger}). The function $(s-1)\zeta(\dF-\lat)$ is thus
known to extends holomorphically to $\C$ (see \cite{AurSal, BDK, KLP, KokKor}).
Moreover the preceding proposition yields that
\[
 (s-1)\zeta(s,\dL-\lat)\,=\,(s-1)\cdot \left[ \zeta(s,\dF-\lat)\,+\,\frac{\sin(\pi s)}{\pi}\left( h_C(s) \,+\,
\frac{\alp_0}{s}\exp(-s\ln C)\right) \, +\, \zeta_2(s)\,+\,R_C(s,\tilde{\lambda})\right].
\]

The statement thus follows by examining the analytic continuation of each
individual term.
\end{proof}

\begin{remk}
By evaluating everything at $s=0$ we obtain
\[
\zeta(0,\dL-\lat)\,=\,\zeta(0,\dF-\lat)+\alp_0.
\]
\end{remk}

In the regular case, we can thus define the regularized zeta determinant
by the usual formula
\[
{\det}_\zeta(\dL-\lat)\,=\, \exp\left(-\zeta'(0,\dL-\lat)\right),
\]
and we obtain the following theorem.

\begin{theorem}\label{thm:BKreg}
Let $L$ define a regular extension and set $\Gamma$ to be
\begin{equation}
  \label{eq:defGamma}
  \Gamma = \lim_{\la\rightarrow \infty} \ln\left(D(-|\la|)\right)-\alp_0\ln(-|\la|).
\end{equation}
Then, for any $\lat\in\Omega$ we have
\begin{equation}\label{eq:BKreg}
{\det}_\zeta(\dL-\lat)\,=\,e^{-\Gamma}\cdot D(\lat)\cdot {\det}_\zeta(\dF-\lat).
\end{equation}
\end{theorem}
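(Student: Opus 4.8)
The plan is to differentiate, at $s=0$, the decomposition of $\zeta(s,\dL-\lat)-\zeta(s,\dF-\lat)$ assembled in the proof of the preceding corollary, and to read off the determinant from ${\det}_\zeta(\dL-\lat)=\exp(-\zeta'(0,\dL-\lat))$, using that $\zeta(\cdot,\dF-\lat)$ is already known to be regular at $0$. Dividing that corollary's identity by $(s-1)$, near $s=0$ we have
\[
\zeta(s,\dL-\lat)-\zeta(s,\dF-\lat)=\frac{\sin(\pi s)}{\pi}h_C(s)+\alp_0\frac{\sin(\pi s)}{\pi s}e^{-s\ln C}+\zeta_2(s)+R_C(s,\lat),
\]
and I would evaluate the $s$-derivative at $0$ term by term. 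Since $R_C$ vanishes to second order at $0$ it contributes nothing. The factor $\frac{\sin(\pi s)}{\pi s}$ is even and equals $1$ at $0$, so the second term contributes $\alp_0\cdot(-\ln C)$; and since $\frac{d}{ds}\frac{\sin(\pi s)}{\pi}=1$ at $s=0$ while $\sin 0=0$ kills $h_C'(0)$, the first term contributes exactly $h_C(0)$.

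The term $\zeta_2$ is handled by the fundamental theorem of calculus. As $\zeta_2(s)=2i\sin(\pi s)\int_{-C}^{\lat}(\la-\lat)_0^{-s}\xit'(\la)\,d\la$ vanishes at $s=0$ and $\frac{d}{ds}[2i\sin(\pi s)]_{s=0}=2i\pi$, one gets $\zeta_2'(0)=2i\pi\int_{-C}^{\lat}\xit'(\la)\,d\la$. By the definition $\xit=-\frac{1}{2i\pi}\ln D$, this is $-\int_{-C}^{\lat}(\ln D)'(\la)\,d\la$, which telescopes to $\ln D(-C)-\ln D(\lat)$.

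The crux is to identify $h_C(0)$ with $\Gamma$. First, cancelling the subtracted $\frac{\alp_0}{s}e^{-s\ln C}$ in \refeq{eq:defhC} against $\alp_0\int_C^\infty t^{-s-1}\,dt=\frac{\alp_0}{s}e^{-s\ln C}$ gives the explicit form $h_C(0)=\int_C^\infty\bigl(2i\pi\xit'(-t)-\tfrac{\alp_0}{t}\bigr)\,dt$. Then I would integrate the relation $\frac{d}{dx}\ln D(-x)=2i\pi\xit'(-x)$ from $C$ to $x$ and subtract $\alp_0\ln x=\alp_0\ln C+\int_C^x \alp_0 t^{-1}\,dt$; letting $x\to\infty$ and invoking \refeq{eq:defGamma} yields
\[
\Gamma=\ln D(-C)-\alp_0\ln C+\int_C^\infty\Bigl(2i\pi\xit'(-t)-\frac{\alp_0}{t}\Bigr)\,dt=\ln D(-C)-\alp_0\ln C+h_C(0).
\]
Adding the four contributions, the $\ln D(-C)$ and $\alp_0\ln C$ pieces cancel and I am left with $\zeta'(0,\dL-\lat)-\zeta'(0,\dF-\lat)=\Gamma-\ln D(\lat)$; exponentiating gives \refeq{eq:BKreg}.

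The main obstacle will be bookkeeping rather than analysis: one must verify that the answer is independent of the auxiliary cutoff $C$, which happens precisely because the $C$-dependent pieces produced by $\zeta_2'(0)$, by $h_C(0)$, and by the $\frac{\sin(\pi s)}{\pi s}e^{-s\ln C}$ term cancel in pairs, and one must keep the sign and branch conventions of $\xit$, of the cut $c_{\lat}$, and of the boundary values $(\la-\lat)_0^{-s}$ mutually consistent. I would also stress that the regular hypothesis enters twice: via $l_0=0$, so that no stray logarithmic term obstructs differentiation at $0$, and via Lemma \ref{lem:xiasymp}, which makes $h_C$ holomorphic near $0$ so that $h_C(0)$ is meaningful.
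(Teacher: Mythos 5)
Your proof is correct and takes essentially the same route as the paper: the same decomposition $\zeta(s,\dL-\lat)-\zeta(s,\dF-\lat)=\frac{\sin(\pi s)}{\pi}\left(h_C(s)+\frac{\alp_0}{s}e^{-s\ln C}\right)+\zeta_2(s)+R_C(s,\lat)$ differentiated at $s=0$, the same evaluation $\zeta_2'(0)=2i\pi\left[\xit(\lat)-\xit(-C)\right]=\ln D(-C)-\ln D(\lat)$, and the same use of $h_C(0)=\int_C^\infty\left(2i\pi\xit'(-t)-\frac{\alp_0}{t}\right)dt$. The only cosmetic difference is at the final step, where you prove $\Gamma=\ln D(-C)-\alp_0\ln C+h_C(0)$ directly for fixed $C$ by integrating $\frac{d}{dx}\ln D(-x)$ from $C$ to $\infty$, whereas the paper first observes that this combination is independent of $C$ and then lets $C\to\infty$, using that $h_C(0)\to 0$; the two arguments rest on the same $L^1$ bound from Lemma \ref{lem:xiasymp} and are interchangeable.
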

\begin{proof}
According to the preceding proposition, we have
\[
\zeta'(0,\dL-\lat)-\zeta'(0,\dF-\lat)\,=\,
\zeta_2'(0)+ h_C(0)-\alp_0\ln(C).
\]
We compute
\[
\zeta_2'(0)\,=\,2i\pi \left[ \xit(\lat)-\xit(-C)\right].
\]
Combining the two we find
\begin{equation*}
\begin{split}
\zeta'(0,\dL-\lat)-\zeta'(0,\dF-\lat)\,&=\,
2i\pi \xit(\lat)-2i\pi \xit(-C)+h_C(0)-\alp_0\ln(C)\\
&=\, 2i\pi\xit(\lat)+\ln\left(D(-C)\right)-\alp_0\ln(C)+h_C(0)
\end{split}
\end{equation*}

This implies the result with $\Gamma$ replaced by the quantity $\ln(D(-C))-\alp_0\ln(C)+h_C(0)$
(which proves in particular that the latter doesn't depend on $C$ large enough).
When we let $C$ go to infinity, on the one hand
$\ln(D(-C))-\alp_0\ln(C)$ goes to $\Gamma,$ and on the other hand, since
\[
h_C(0)\,=\, \int_C^\infty \left(2i\pi\xit'(-|\la|)-\frac{\alp_0}{\la}\right) \,d\la
\]
and the function inside the integral is $L^1,$
$h_C(0)$ goes to $0.$ This finishes proving the theorem.
\end{proof}

\begin{remk}
As soon as $h_C$ allows the definition of the relative zeta determinant of
$\dL-\lat$ and $\dF-\lat,$ then, using theorem \ref{thm:BKreg} and differentiating with
respect to $\la,$ we recover a well-known fact of this theory :
\[
\partial_\lat \left(\ln\det(\dL-\lat)-\ln\det(\dF-\lat)\right)\,=\, 2i\pi\xit'(\lat).
\]
(compare with \cite{Forman, HassZeld, Carron})
\end{remk}

\begin{remk}
For non-regular extensions, it is still possible to analytically continue
$\zeta$ to $\Re s>0$ and to define a zeta-regularized determinant by picking some coefficient
in the asymptotic expansion of $\zeta(s,\dL-\lat)$ near $0$ (see \cite{KLP}). Note however, that
the limit $\tilde{\lambda}\rightarrow 0$ will be problematic.
\end{remk}

\subsection{Proof of theorem \ref{thm:intromain}}
In order to get the theorem of the introduction, we now let $\lat$ go to $0$.
We thus modify the zeta-regularized determinant in order to exclude the
eigenvalue $0.$ Define by $\delta_L$ (resp. $\delta_F$) the dimensions
of $\ker(\dL)$ (resp. $\ker(\dF)$). Equation \refeq{eq:difftraceD} implies that
$0$ is a pole of $\frac{D'}{D}$ with residue $d:=\delta_L-\delta_F$ so that we can define
$D^*(0)$
\[
D^*(0):=\lim_{\la\rightarrow 0}D(\la)(-\la)^{-(\delta_L-\delta_F)}.
\]

On the other hand, we define the modified zeta function by
\[
\zeta^*(s,\dF-\lat)\,=\,\zeta(s,\dF-\lat)-\delta_F (-\lat)^{-s}
\]
and the corresponding modified determinant.

\begin{defn}\label{def:moddet}
Let $L$ be defining a regular extension (or $L=F$),
the modified zeta determinant of $\Delta_L$ is defined by
\[
{\det}^*_{\zeta}(\dL)\,=\,\lim_{\tilde{\la} \rightarrow 0} (-\lat)^{-\delta_L}
{\det}_{\zeta}(\dL-\lat)
\]
\end{defn}

Using this definition for $\dL$ and $\dF$, and plugging into \refeq{eq:BKreg}, the powers of
$-\lat$ cancel out and we may let $\lat$ go to zero. We thus obtain the theorem
in the introduction (Thm. \ref{thm:intromain}).

When $d=0$, the prefactor $D^*(0)$ may be computed using the method of section \ref{sec:S0}.
When $d>0$ then this method has to be refined to compute more terms in the Taylor expansion
of $S(\la)$ at $\la=0.$ In the following example, we will pay special attention
to addressing the question of the kernel of $P+QS(0).$

\subsection{On the Euclidean sphere with one $4\pi$ and  six $\pi$ singularities}
We consider the Euclidean sphere with six $\pi$ singularities and one $4\pi$ conical
point.
We define
\[
A^{\pm}\,=\,\left(\begin{array}{c}
a_0^\pm \\
a_{-\und}^\pm \\
a_{+\und}^\pm \\
{{\tilde{A}}^\pm}
\end{array}
\right )
\]
where $a_i^\pm,~ i=-\und,\,0,\,\und$ correspond to the $4\pi$ singularity and
${\tilde{A}}^{\pm}$ are the coefficients corresponding to the remaining  six $\pi$ singularities.
Recall that for each of the latter there are only two coefficients $a_0^\pm.$

A regular extension thus relates only the coefficients $a_{\pm \und}^\pm$

We define $P_{\theta}$ and $Q_{\theta}$ by
\[
P_{\theta}:=\left( \begin{array}{ccc}
1 & 0 & 0 \\
0& \cos\theta I_2 & 0\\
0& 0& I
\end{array}
\right) ~,~
Q_{\theta}:=\left( \begin{array}{ccc}
0 & 0 & 0 \\
0& \sin\theta I_2 & 0\\
0& 0& 0
\end{array}
\right).
\]

This choice defines a regular self-adjoint extension (which is, moreover invariant under complex
conjugation). We have
\[
D(\la)=\det(P+QS(\la))=\det( \cos\theta I_2 +\sin\theta \tilde{S}(\la)),
\]

where $\tilde S$ is the $2\times 2$ matrix obtained from $S$ by erasing the first row and column
(that correspond to $a_0^\pm$) and all the rows and columns corresponding to ${\tilde{A}}^\pm$

According to proposition \ref{prop:Sasymp}, when $\theta\neq 0, \pi$, the asymptotic expansion of $D$ is given by
\begin{equation*}
\begin{split}
\ln D(-|\la|)&= 2|\nu|\ln( |\la|) +
\ln\Big( \left[ \frac{\Gamma(1-|\nu|)}{2^{2|\nu|}\Gamma(1+|\nu|)}\sin \theta\right]^2\Big)+O(1),\\
& = \ln( |\la|) + \ln \left[ \sin \theta\right]^2\,+O(1),
\end{split}
\end{equation*}
since $|\nu|=\und.$

Finally, we obtain that, for any $\theta\neq 0~,\pi$ such that $-\mbox{cotan}(\theta)$ isn't an eigenvalue of
$\tilde{S}(0)$ the following holds :
\begin{equation*}
{\det}^*_{\zeta}(\dL)=
\frac{\det(\cos\theta I_2 +\sin\theta \tilde{S}(0))}{\sin^2\theta} \cdot {\det}^*_\zeta(\dF).
\end{equation*}

\end{document}